\newtheorem{thm}{Theorem}[section]
\newtheorem{cor}[thm]{Corollary}
\newtheorem{prop}[thm]{Proposition}
\theoremstyle{definition}
\newtheorem{defn}[thm]{Definition}
\theoremstyle{remark}
\numberwithin{equation}{section}
\newcommand{\h}{\mathcal{H}}
\newcommand{\K}{\mathcal{K}}
\begin{document}
\title[Invertibility of Multipliers]{Invertibility of Multipliers for Continuous G-frames}%
\author {Y. Khedmati and M. R. Abdollahpour }%
\address{
\newline
\indent Department of Mathematics
\newline
\indent Faculty of Sciences
\newline \indent   University of Mohaghegh Ardabili
\newline \indent  Ardabil 56199-11367
\newline \indent Iran}
\email{m.abdollah@uma.ac.ir, mrabdollahpour@yahoo.com}
\email{khedmati.y@uma.ac.ir, khedmatiy.y@gmail.com}
\subjclass[2000]{Primary 41A58, 42C15} \keywords{Multiplier, Invertibility, continuous $g$-Bessel family, continuous $g$-frame.}

\begin{abstract}
In this paper we study the concept of multipliers for continuous $g$-Bessel families in Hilbert spaces. We present necessary conditions for invertibility of multipliers for continuous $g$-Bessel families and sufficient conditions for invertibility of multipliers for continuous $g$-frames.
\end{abstract}
\maketitle
\section{{\textbf Introduction}}
In 1952, the concept of frames for Hilbert spaces was defined by Duffin and Schaeffer \cite{DS}. Frames are important tools in the signal processing, image processing, data compression, etc. In 1993, Ali, Antoine and Gazeau developed the notion of ordinary frame to a family indexed by a measurable space which are known as continuous frames \cite{Ali2}.
 In 2006, $g$-frames or generalized frames introduced by Sun \cite{ws}. Abdollahpour and Faroughi introduced and investigated continuous $g$-frames and Riesz-type continuous $g$-frames \cite{ostad}.
 \par
In the rest of this paper $(\Omega,\mu)$ is a measure space with positive measure $\mu$, $\{\K_\omega:\omega\in \Omega\}$ is a family of Hilbert spaces and $GL(\h)$ denotes the set of all invertible bounded linear operators on Hilbert space $\h$.
\par
In 2007, the Bessel multiplier for Bessel sequences in Hilbert spaces was introduced by P. Balazs \cite{base}. 
\begin{defn}
Let $\h$ and $\K$ be Hilbert spaces. Suppose that $F=\{f_i\}_{i\in I}$ and $G=\{g_i\}_{i\in I}$ are Bessel sequences for $\h$ and $\K,$ respectively, and $m=\{m_i\}_{i\in I}\in l^{\infty}(I).$ The operator $M_{m,F,G}f:\h\rightarrow\K$ defined by
\begin{align*}
M_{m,F,G}f=\sum_{i\in I}m_i\langle f,f_i\rangle g_i.
\end{align*}
is called the Bessel multiplier for $F$ and $G.$
\end{defn}
In this paper we obtain necessary conditions for invertibility of multipliers for continuous $g$-Bessel families and sufficient conditions for invertibility of multipliers for continuous $g$-frames, by extending some results of \cite{base}. In the rest of this section, we summarize some facts about continuous $g$-frames and multipliers of continuous $g$-Bessel families from \cite{ostad}, \cite{alizad}.
\par We say that $F\in\prod_{\omega\in \Omega}\K_{\omega}$ is
strongly measurable if $F$ as a mapping of $\Omega$ to
$\bigoplus_{\omega\in \Omega}\K_{\omega}$ is measurable, where
$$\prod_{\omega\in \Omega}\K_{\omega}=\left\{f:\Omega\rightarrow\bigcup_{\omega\in \Omega}\K_{\omega}
:f(\omega)\in \K_{\omega}\right\}.$$
\begin{defn}
We say that $\Lambda=\{\Lambda_{\omega}\in
B(\h,\K_{\omega}):\,\omega\in\Omega\}$ is a continuous $g$-frame for $\h$  with respect to
$\{\K_{\omega}:\omega\in \Omega\}$ if
\begin{enumerate}
\item[(i)] for each $f\in \h$, $\{\Lambda_{\omega}f:\omega\in \Omega\}$ is
strongly measurable,
\item[(ii)]
there are two constants $0<A_\Lambda\leq
B_\Lambda<\infty$ such that
\begin{equation}\label{cgframe}
A_\Lambda\|f\|^{2}\leq\int_{\Omega}\|\Lambda_\omega f\|^{2}
d\mu(\omega)\leq B_\Lambda\|f\|^{2},\; f\in \h.
\end{equation}
\end{enumerate}
We call $A_\Lambda,B_\Lambda$ the lower and upper continuous $g$-frame bounds,
respectively. $\Lambda$ is called a tight continuous $g$-frame if $A_\Lambda=B_\Lambda,$ and a
Parseval continuous $g$-frame if $A_\Lambda=B_\Lambda=1.$ 
A family $\Lambda=\{\Lambda_{\omega}\in
B(\h,\K_{\omega}):\,\omega\in\Omega\}$ is called a continuous
$g$-Bessel family for $\h$  with respect to $\{\K_{\omega}:\omega\in \Omega\}$ if the right hand side in the inequality (\ref{cgframe})
holds for all $f\in \h,$ in this case, $B_\Lambda$ is called the continuous $g$-Bessel constant.
\end{defn}
\begin{prop}\label{soos}\cite{ostad}
Let $\Lambda=\{\Lambda_{\omega}\in B(\h,\K_{\omega}):\omega \in \Omega\}$ be a continuous
$g$-frame. There exists a unique positive and
invertible operator $S_\Lambda:\h\rightarrow \h$ such that
\begin{eqnarray*}\label{di}
\langle S_\Lambda f,g\rangle =\int_{\Omega}\langle \Lambda_\omega f,
\Lambda_{\omega}g\rangle d\mu(\omega),\quad f,g\in \h,
\end{eqnarray*}
 and $A_\Lambda I\leq S_\Lambda\leq B_\Lambda I.$
\end{prop}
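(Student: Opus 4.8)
The plan is to realize the right-hand side as a bounded, positive sesquilinear form on $\h\times\h$ and then to invoke the Riesz representation theorem. First I would define $\phi\colon\h\times\h\to\mathbb{C}$ by $\phi(f,g)=\int_{\Omega}\langle\Lambda_\omega f,\Lambda_\omega g\rangle\,d\mu(\omega)$ and check that this makes sense. The scalar function $\omega\mapsto\langle\Lambda_\omega f,\Lambda_\omega g\rangle$ is measurable because $\{\Lambda_\omega f\}$ and $\{\Lambda_\omega g\}$ are strongly measurable, and by Cauchy--Schwarz in $\K_\omega$ followed by Cauchy--Schwarz in $L^2(\Omega,\mu)$ together with the upper bound in (\ref{cgframe}),
\[
\int_{\Omega}|\langle\Lambda_\omega f,\Lambda_\omega g\rangle|\,d\mu(\omega)\leq\Big(\int_{\Omega}\|\Lambda_\omega f\|^2\,d\mu(\omega)\Big)^{1/2}\Big(\int_{\Omega}\|\Lambda_\omega g\|^2\,d\mu(\omega)\Big)^{1/2}\leq B_\Lambda\|f\|\,\|g\|,
\]
so $\phi$ is well defined, linear in the first variable, conjugate-linear in the second, and bounded by $B_\Lambda$.

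Next, for fixed $f$ the map $g\mapsto\overline{\phi(f,g)}$ is a bounded linear functional on $\h$, so by the Riesz representation theorem there is a unique vector $S_\Lambda f$ with $\langle S_\Lambda f,g\rangle=\phi(f,g)$ for all $g\in\h$; linearity of $f\mapsto S_\Lambda f$ and the estimate $\|S_\Lambda\|\leq B_\Lambda$ follow from the corresponding properties of $\phi$. Uniqueness is immediate: if $T$ also satisfies $\langle Tf,g\rangle=\phi(f,g)$ for all $f,g$, then $\langle(T-S_\Lambda)f,g\rangle=0$ for all $g$, whence $T=S_\Lambda$.

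Then I would read off positivity and the operator inequality: since $\langle S_\Lambda f,f\rangle=\int_{\Omega}\|\Lambda_\omega f\|^2\,d\mu(\omega)$ is real and nonnegative, $S_\Lambda$ is self-adjoint and positive, and the two inequalities in (\ref{cgframe}) say exactly $A_\Lambda\|f\|^2\leq\langle S_\Lambda f,f\rangle\leq B_\Lambda\|f\|^2$, i.e.\ $A_\Lambda I\leq S_\Lambda\leq B_\Lambda I$. For invertibility, the lower bound gives $\|S_\Lambda f\|\geq A_\Lambda\|f\|$, so $S_\Lambda$ is injective with closed range; being self-adjoint, $\overline{\operatorname{Ran}S_\Lambda}=(\ker S_\Lambda)^{\perp}=\h$, so $S_\Lambda$ is a bijection and hence lies in $GL(\h)$ by the open mapping theorem. (Alternatively, $\|I-B_\Lambda^{-1}S_\Lambda\|\leq 1-A_\Lambda/B_\Lambda<1$, and a Neumann series gives the inverse.)

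I expect the only step that needs genuine care is the first one: verifying measurability and $\mu$-integrability of the scalar integrand so that $\phi$ is defined on all of $\h\times\h$ with the bound $B_\Lambda$. Once $\phi$ is available as a bounded sesquilinear form, the remainder is the standard frame-operator argument.
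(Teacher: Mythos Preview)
Your argument is correct and is exactly the standard construction of the frame operator via a bounded positive sesquilinear form and the Riesz representation theorem. Note, however, that in this paper Proposition~\ref{soos} is simply quoted from \cite{ostad} and carries no proof here, so there is nothing to compare your proposal against in the present text; the original reference establishes it by essentially the same route you outline.
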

The operator $S_\Lambda$ in Proposition \ref{soos} is called the continuous
$g$-frame operator of $\Lambda.$ 
\par We consider the space
$$\widehat {\K}=\left\{F\in \prod_{\omega\in\Omega}\K_{\omega}:
\text{F is strongly measurable},\:
\int_{\Omega}\|F(\omega)\|^{2}d\mu(\omega)<\infty\right\}.$$
It is clear that $\widehat {\K}$ is a Hilbert space with point wise operations and with the inner
product given by
$$\langle F,G \rangle=\int_{\Omega}\langle
F(\omega),G(\omega)\rangle d\mu(\omega).$$
\begin{prop}\cite{ostad}\label{combi}
Let $\Lambda=\{\Lambda_{\omega}\in B(\h,\K_{\omega}):\omega \in \Omega\}$ be a continuous
$g$-Bessel family. Then the mapping $T_\Lambda:\widehat {\K}\rightarrow\h$
defined by
\begin{eqnarray}\label{ti}
\langle T_\Lambda F,g\rangle=\int_{\Omega}\langle\Lambda_{\omega}^{*}F(\omega),g\rangle
d\mu(\omega),\;F\in\widehat {\K},\,g\in\h
\end{eqnarray}
is linear and bounded with $\|T_{\Lambda}\|\leq\sqrt{B_\Lambda}.$ Also, for
each $g\in \h$ and $\omega\in\Omega,$ we have
$$(T_\Lambda^{*}g)(\omega)=\Lambda_{\omega}g.$$
\end{prop}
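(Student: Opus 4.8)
The plan is to establish the three claims of Proposition \ref{combi} in sequence: first the well-definedness and boundedness of $T_\Lambda$, then the identification of its adjoint. For the boundedness, I would fix $F\in\widehat{\K}$ and consider the linear functional $g\mapsto \int_{\Omega}\langle\Lambda_\omega^* F(\omega),g\rangle\,d\mu(\omega)$ on $\h$. Using $\langle\Lambda_\omega^* F(\omega),g\rangle=\langle F(\omega),\Lambda_\omega g\rangle$ and the Cauchy--Schwarz inequality in $\widehat{\K}$, this functional is bounded by
\begin{align*}
\abs{\int_{\Omega}\langle F(\omega),\Lambda_\omega g\rangle\,d\mu(\omega)}
\leq \left(\int_{\Omega}\norm{F(\omega)}^2 d\mu(\omega)\right)^{1/2}
\left(\int_{\Omega}\norm{\Lambda_\omega g}^2 d\mu(\omega)\right)^{1/2}
\leq \norm{F}\,\sqrt{B_\Lambda}\,\norm{g},
\end{align*}
where the last step uses the continuous $g$-Bessel inequality (\ref{cgframe}). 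By the Riesz representation theorem there is a unique vector, which we call $T_\Lambda F$, with $\langle T_\Lambda F,g\rangle$ equal to this functional, and the above bound gives $\norm{T_\Lambda F}\leq\sqrt{B_\Lambda}\,\norm{F}$, hence $\norm{T_\Lambda}\leq\sqrt{B_\Lambda}$. Linearity of $F\mapsto T_\Lambda F$ follows from the linearity of the integral and the uniqueness in the Riesz representation.

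Before invoking Cauchy--Schwarz one must check that $\omega\mapsto\langle F(\omega),\Lambda_\omega g\rangle$ is genuinely integrable; this follows because $\omega\mapsto\langle F(\omega),\Lambda_\omega g\rangle$ is measurable (both $F$ and $\{\Lambda_\omega g\}$ are strongly measurable, and the inner product is continuous) and the Cauchy--Schwarz bound on the integrand in terms of $\norm{F(\omega)}\,\norm{\Lambda_\omega g}$ together with the finiteness of $\int_\Omega\norm{F(\omega)}^2 d\mu(\omega)$ and $\int_\Omega\norm{\Lambda_\omega g}^2 d\mu(\omega)$ shows the integrand lies in $L^1(\Omega,\mu)$. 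This measurability and integrability bookkeeping is the only genuinely delicate point; the rest is formal.

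For the adjoint formula, I would compute, for $g\in\h$ and $F\in\widehat{\K}$,
\begin{align*}
\langle T_\Lambda^* g,F\rangle_{\widehat{\K}}=\overline{\langle T_\Lambda F,g\rangle_{\h}}
=\overline{\int_{\Omega}\langle F(\omega),\Lambda_\omega g\rangle\,d\mu(\omega)}
=\int_{\Omega}\langle\Lambda_\omega g,F(\omega)\rangle\,d\mu(\omega).
\end{align*}
Since by definition $\langle H,F\rangle_{\widehat{\K}}=\int_\Omega\langle H(\omega),F(\omega)\rangle\,d\mu(\omega)$, comparing the two displays for all $F\in\widehat{\K}$ forces $(T_\Lambda^* g)(\omega)=\Lambda_\omega g$ for $\mu$-a.e.\ $\omega$; one also notes $\omega\mapsto\Lambda_\omega g$ indeed belongs to $\widehat{\K}$ by the $g$-Bessel condition, so the identity $(T_\Lambda^* g)(\omega)=\Lambda_\omega g$ makes sense as an element of $\widehat{\K}$. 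I expect no real obstacle here beyond being careful that equality in $\widehat{\K}$ is equality $\mu$-almost everywhere.
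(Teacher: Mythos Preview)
The paper does not actually supply a proof of this proposition; it is quoted from \cite{ostad} and stated without argument. Your proposal is the standard proof---bound the sesquilinear form via Cauchy--Schwarz in $\widehat{\K}$ and the $g$-Bessel inequality, invoke Riesz representation to define $T_\Lambda F$, and then read off the adjoint from the defining integral---and it is correct, including the measurability and integrability checks you flag. There is nothing to compare against in the present paper, but your argument is exactly what one expects the cited reference to contain.
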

The operators $T_\Lambda$ and $T_\Lambda^{*}$ in Proposition \ref{combi} are called the synthesis and analysis operators of $\Lambda=\{\Lambda_{\omega}\in B(\h,\K_{\omega}):\omega \in \Omega\}$, respectively.
\begin{defn}
Let $\Lambda=\{\Lambda_{\omega}\in B(\h,\K_{\omega}):\omega \in \Omega\}$ and $\Theta=\{\Theta_{\omega}\in B(\h,\K_{\omega}):\omega \in \Omega\}$  be two continuous $g$-Bessel families such that
$$\langle f,g\rangle=\int_{\Omega}\langle \Theta_\omega f,\Lambda_{\omega}g\rangle d\mu(\omega),\; f,g\in\h,$$ then $\Theta$ is called a dual of $\Lambda$.
\end{defn}
Let $\Lambda=\{\Lambda_{\omega}\in B(\h,\K_{\omega}):\omega \in \Omega\}$ be a continuous $g$-frame. Then
$\widetilde\Lambda=\Lambda S_\Lambda^{-1}=\{\Lambda_\omega S_\Lambda^{-1}\in B(\h, \K_\omega):\omega \in \Omega\}$ is a continuous
$g$-frame and $\widetilde\Lambda$ is a dual of $\Lambda.$ We call
$\widetilde\Lambda$ the canonical dual of $\Lambda$. In this paper, we will show by $\widetilde\Lambda$ the canonical dual of $\Lambda.$
\par Two continuous $g$-Bessel families $\Lambda=\{\Lambda_{\omega}\in B(\h, \K_{\omega}):\omega \in \Omega\}$ and $\Theta=\{\Theta_{\omega}\in B(\h, \K_{\omega}):\omega \in \Omega\}$ are weakly equal, if for all $f\in \h$,
\begin{eqnarray*}
\Lambda_\omega f=\Theta_\omega f, \; a.e.\;\; \omega\in\Omega.
\end{eqnarray*}
\begin{defn}\cite{yavgdual}
Let $\Lambda=\{\Lambda_{\omega}\in B(\h,\K_{\omega}):\omega \in \Omega\}$ and $\Theta=\{\Theta_{\omega}\in B(\h,\K_{\omega}):\omega \in \Omega\}$  be  two continuous $g$-Bessel families. The family $\Theta$ is called a generalized dual of $\Lambda$ (or a $g$-dual of $\Lambda$), whenever the well defined operator $S_{\Lambda\Theta}:\h\rightarrow\h$,
\begin{eqnarray*}
\langle S_{\Lambda\Theta}f,g \rangle=\int_\Omega \langle \Theta_\omega f, \Lambda_\omega g \rangle d\mu(\omega),\quad  f,g\in\h.
\end{eqnarray*}
 is invertible.
\end{defn}
In the case that, the continuous $g$-Bessel family $\Theta$ is a $g$-dual of the continuous $g$-Bessel family $\Lambda,$  then $\Theta$ is a dual of a continuous $g$-Bessel family $\Lambda S_{\Theta\Lambda}^{-1}=\{\Lambda_\omega S_{\Theta\Lambda}^{-1}\in B
(\h,\K_\omega):\omega \in \Omega\},$ i.e.
\begin{eqnarray}\label{g-dual}
\langle f,g\rangle=\int_{\Omega}\langle \Theta_{\omega}f,
\Lambda_{\omega}S_{\Theta\Lambda}^{-1}g\rangle d\mu(\omega),\quad f,g\in\h.
\end{eqnarray}
\begin{prop}\label{s}\cite{alizad}
Let $\Lambda=\{\Lambda_{\omega}\in B(\h,\K_{\omega}):\omega \in \Omega\}$ and $\Theta=\{\Theta_\omega \in B(\h,\K_\omega):\omega \in \Omega \}$ be continuous
$g$-Bessel families and $m\in L^{\infty}(\Omega,\mu).$ The operator $M_{m,\Lambda,\Theta}:\h\rightarrow \h$ defined by 
\begin{eqnarray*}\label{di}
\langle M_{m,\Lambda,\Theta} f,g\rangle =\int_{\Omega}m(\omega)\langle \Theta_\omega f,
\Lambda_{\omega}g\rangle d\mu(\omega),\quad f,g\in \h,
\end{eqnarray*}
is a bounded operator with bound $\|m\|_{\infty}\sqrt{B_\Lambda B_\Theta}.$
\end{prop}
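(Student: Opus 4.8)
The plan is to realize the multiplier $M_{m,\Lambda,\Theta}$ as a composition of three bounded operators whose norms we already control, and then multiply the bounds. Specifically, I would introduce the multiplication operator $D_m:\widehat{\K}\to\widehat{\K}$ defined by $(D_m F)(\omega)=m(\omega)F(\omega)$; since $m\in L^\infty(\Omega,\mu)$, this is bounded with $\|D_m\|\le\|m\|_\infty$, because $\int_\Omega\|m(\omega)F(\omega)\|^2\,d\mu(\omega)\le\|m\|_\infty^2\int_\Omega\|F(\omega)\|^2\,d\mu(\omega)$. Next I would recall from Proposition \ref{combi} that $\Theta$ has analysis operator $T_\Theta^*:\h\to\widehat{\K}$ with $(T_\Theta^* f)(\omega)=\Theta_\omega f$ and $\|T_\Theta^*\|=\|T_\Theta\|\le\sqrt{B_\Theta}$, and that $\Lambda$ has synthesis operator $T_\Lambda:\widehat{\K}\to\h$ with $\|T_\Lambda\|\le\sqrt{B_\Lambda}$.

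The key step is to verify the factorization $M_{m,\Lambda,\Theta}=T_\Lambda D_m T_\Theta^*$. To do this I would check the defining weak identity: for $f,g\in\h$,
\begin{align*}
\langle T_\Lambda D_m T_\Theta^* f, g\rangle
&=\langle D_m T_\Theta^* f, T_\Lambda^* g\rangle
=\int_\Omega\langle (D_m T_\Theta^* f)(\omega),(T_\Lambda^* g)(\omega)\rangle\,d\mu(\omega)\\
&=\int_\Omega m(\omega)\langle \Theta_\omega f,\Lambda_\omega g\rangle\,d\mu(\omega)
=\langle M_{m,\Lambda,\Theta} f,g\rangle,
\end{align*}
using $(T_\Lambda^* g)(\omega)=\Lambda_\omega g$ from Proposition \ref{combi} and the definition of the inner product on $\widehat{\K}$. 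Since this holds for all $f,g$, the two operators coincide.

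Finally, submultiplicativity of the operator norm gives $\|M_{m,\Lambda,\Theta}\|\le\|T_\Lambda\|\,\|D_m\|\,\|T_\Theta^*\|\le\sqrt{B_\Lambda}\,\|m\|_\infty\,\sqrt{B_\Theta}=\|m\|_\infty\sqrt{B_\Lambda B_\Theta}$, as claimed; boundedness is immediate from the same chain. I expect the only genuinely delicate point to be the measurability/well-definedness bookkeeping: one must confirm that $D_m T_\Theta^* f$ again lies in $\widehat{\K}$ (strong measurability of $\omega\mapsto m(\omega)\Theta_\omega f$ and finiteness of the integral), but both follow routinely — measurability from the product of a scalar measurable function with a strongly measurable vector field, and integrability from the $L^\infty$ bound on $m$ together with the continuous $g$-Bessel bound on $\Theta$. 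Everything else is the formal computation above.
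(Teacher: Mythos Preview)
The paper does not supply its own proof of this proposition: it is quoted verbatim from \cite{alizad} and stated without argument, so there is no in-paper proof to compare against. Your proof is correct and is in fact the standard one; the factorization $M_{m,\Lambda,\Theta}=T_\Lambda D_m T_\Theta^*$ via the diagonal multiplication operator on $\widehat{\K}$ is exactly how such results are typically established, and your remarks about the measurability and integrability bookkeeping for $D_m T_\Theta^* f$ cover the only subtle point.
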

The operator $M_{m,\Lambda,\Theta}$ in Proposition \ref{s} is called the continuous $g$-Bessel multiplier for $\Lambda$ and $\Theta$ with respect to $m.$ Note that $M_{1,\Lambda,\Theta}=S_{\Lambda\Theta}.$
\begin{prop}\label{ss}\cite{alizad}
Let $\Lambda=\{\Lambda_{\omega}\in B(\h,\K_{\omega}):\omega \in \Omega\}$ and $\Theta=\{\Theta_\omega \in B(\h,\K_\omega):\omega \in \Omega \}$ be continuous
$g$-Bessel families and $m\in L^{\infty}(\Omega,\mu).$ Then 
\begin{eqnarray*}
M_{m,\Lambda,\Theta}^*= M_{\overline{m},\Theta,\Lambda}.
\end{eqnarray*}
\end{prop}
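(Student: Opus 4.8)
The plan is to establish the operator identity in its weak form, i.e.\ to show that
$\langle f, M_{m,\Lambda,\Theta}^{*} g\rangle = \langle f, M_{\overline{m},\Theta,\Lambda} g\rangle$ for all $f,g\in\h$. This suffices, because both sides are genuine bounded operators on $\h$: the operator $M_{\overline{m},\Theta,\Lambda}$ is well defined and bounded by Proposition \ref{s} applied to the continuous $g$-Bessel families $\Theta$ and $\Lambda$ and the symbol $\overline{m}\in L^{\infty}(\Omega,\mu)$ (note $\|\overline{m}\|_{\infty}=\|m\|_{\infty}$), and a bounded operator is determined by the numbers $\langle f,\cdot\, g\rangle$.

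First I would fix $f,g\in\h$ and unwind the adjoint: $\langle f, M_{m,\Lambda,\Theta}^{*} g\rangle = \overline{\langle M_{m,\Lambda,\Theta}^{*} g, f\rangle} = \overline{\langle g, M_{m,\Lambda,\Theta} f\rangle} = \langle M_{m,\Lambda,\Theta} f, g\rangle$. By the defining formula of Proposition \ref{s} this equals $\int_{\Omega} m(\omega)\langle \Theta_{\omega} f, \Lambda_{\omega} g\rangle\, d\mu(\omega)$.

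Next I would expand the candidate right-hand side. Again by Proposition \ref{s}, now with the two families interchanged and symbol $\overline{m}$, we have $\langle M_{\overline{m},\Theta,\Lambda} g, f\rangle = \int_{\Omega} \overline{m(\omega)}\langle \Lambda_{\omega} g, \Theta_{\omega} f\rangle\, d\mu(\omega)$. Conjugating, using $\langle f, M_{\overline{m},\Theta,\Lambda} g\rangle = \overline{\langle M_{\overline{m},\Theta,\Lambda} g, f\rangle}$, the identity $\overline{\langle \Lambda_{\omega} g, \Theta_{\omega} f\rangle} = \langle \Theta_{\omega} f, \Lambda_{\omega} g\rangle$, and the fact that complex conjugation passes through the integral (the integrand is $\mu$-integrable by the Bessel bounds, since $\mu$ is a positive measure), one obtains $\langle f, M_{\overline{m},\Theta,\Lambda} g\rangle = \int_{\Omega} m(\omega)\langle \Theta_{\omega} f, \Lambda_{\omega} g\rangle\, d\mu(\omega)$.

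Comparing the two computations gives $\langle f, M_{m,\Lambda,\Theta}^{*} g\rangle = \langle f, M_{\overline{m},\Theta,\Lambda} g\rangle$ for all $f,g\in\h$, hence $M_{m,\Lambda,\Theta}^{*} = M_{\overline{m},\Theta,\Lambda}$. There is no serious obstacle; the only things deserving a moment's care are the bookkeeping of which $g$-Bessel family sits in which slot of the multiplier (so that Proposition \ref{s} is quoted with the correct roles), and the harmless interchange of conjugation and integration.
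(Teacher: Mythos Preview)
Your argument is correct. Note, however, that the paper does not actually prove this proposition: it is quoted from \cite{alizad} without proof, so there is no ``paper's own proof'' to compare against. Your direct weak-form verification via the defining integral of Proposition~\ref{s} is exactly the standard (and essentially only) way to establish this adjoint identity, and it is carried out cleanly.
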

\section{{\textbf Invertibility of multipliers for continuous $g$-Bessel families}}
In this section we are going to get some results relevant to invertibility of continuous $g$-Bessel multipliers by generalizing results of \cite{dstova}. 
\par For any $\Lambda_\omega\in B(\h,\K_\omega), \omega\in\Omega$ and $m\in L^{\infty}(\Omega,\mu)$ we have 
\begin{align*}
\|(m(\omega)\Lambda_\omega) f\|
=\|m(\omega)\Lambda_\omega f\|
&=|m(\omega)|\|\Lambda_\omega f\|
\\&\leq\|m\|_\infty\|\Lambda_\omega\|\|f\|,\quad f\in\h,
\end{align*}
and therefore $m(\omega)\Lambda_\omega\in B(\h,\K_\omega).$

\begin{prop}\label{waw}
Let $\Lambda=\{\Lambda_{\omega}\in B(\h,\K_{\omega}):\omega \in \Omega\}$ be a continuous $g$-Bessel family and $m\in L^{\infty}(\Omega,\mu).$ Then
\begin{enumerate}
\item[(i)] $m\Lambda=\{m(\omega)\Lambda_{\omega}\in B(\h,\K_{\omega}):\omega \in \Omega\}$ is a continuous $g$-Bessel family with the continuous $g$-Bessel constant $B_\Lambda\|m\|_{\infty}^2.$
\item[(ii)] $M_{m,\Lambda,\Theta}= M_{1,\overline{m}\Lambda,\Theta}=M_{1,\Lambda,m\Theta},$ where $\overline{m}\Lambda=\{\overline{m(\omega)}\Lambda_{\omega}\in B(\h,\K_{\omega}):\omega \in \Omega\}$ and $m\Theta=\{m(\omega)\Theta_{\omega}\in B(\h,\K_{\omega}):\omega \in \Omega\}.$
\end{enumerate}
\end{prop}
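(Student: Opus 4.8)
The plan is to prove (i) straight from the definition of a continuous $g$-Bessel family, and then to deduce (ii) by unfolding the weak definition (Proposition \ref{s}) of each of the three multipliers and comparing the integrands pointwise in $\omega$.

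For (i) I would argue in two steps. First, strong measurability: fixing $f\in\h$, the family $\{\Lambda_\omega f:\omega\in\Omega\}$ is strongly measurable by hypothesis and $m$ is a bounded measurable scalar function, so the pointwise product $\{m(\omega)\Lambda_\omega f:\omega\in\Omega\}$ is again strongly measurable. Second, the upper bound: using the identity $\|m(\omega)\Lambda_\omega f\|=|m(\omega)|\,\|\Lambda_\omega f\|$ recorded just before the proposition, I would write
\begin{align*}
\int_{\Omega}\|m(\omega)\Lambda_\omega f\|^{2}\,d\mu(\omega)
=\int_{\Omega}|m(\omega)|^{2}\,\|\Lambda_\omega f\|^{2}\,d\mu(\omega)
\leq\|m\|_{\infty}^{2}\int_{\Omega}\|\Lambda_\omega f\|^{2}\,d\mu(\omega)
\leq B_\Lambda\|m\|_{\infty}^{2}\|f\|^{2}
\end{align*}
for every $f\in\h$, the last step being the Bessel inequality for $\Lambda$. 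Hence $m\Lambda$ is a continuous $g$-Bessel family with constant $B_\Lambda\|m\|_{\infty}^{2}$. Applying this to $\overline m$ in place of $m$, and separately to $\Theta$ in place of $\Lambda$, shows that $\overline m\Lambda$ and $m\Theta$ are themselves continuous $g$-Bessel families, so the multipliers $M_{1,\overline m\Lambda,\Theta}$ and $M_{1,\Lambda,m\Theta}$ in (ii) are well defined.

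For (ii), let $\Theta=\{\Theta_\omega\in B(\h,\K_\omega):\omega\in\Omega\}$ be a continuous $g$-Bessel family and fix $f,g\in\h$. The $\omega$-component of $\overline m\Lambda$ is $\overline{m(\omega)}\Lambda_\omega$, and since a scalar in the second slot of the inner product comes out conjugated, $\langle\Theta_\omega f,\overline{m(\omega)}\Lambda_\omega g\rangle=m(\omega)\langle\Theta_\omega f,\Lambda_\omega g\rangle$; integrating over $\Omega$ yields $\langle M_{1,\overline m\Lambda,\Theta}f,g\rangle=\langle M_{m,\Lambda,\Theta}f,g\rangle$. Likewise the $\omega$-component of $m\Theta$ is $m(\omega)\Theta_\omega$, and linearity in the first slot gives $\langle m(\omega)\Theta_\omega f,\Lambda_\omega g\rangle=m(\omega)\langle\Theta_\omega f,\Lambda_\omega g\rangle$, so $\langle M_{1,\Lambda,m\Theta}f,g\rangle=\langle M_{m,\Lambda,\Theta}f,g\rangle$. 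Since $f,g\in\h$ were arbitrary, all three operators coincide.

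I do not expect a genuine obstacle here; the argument is essentially bookkeeping. The only two points that need a moment's care are that multiplication by the scalar function $m$ preserves strong measurability of the relevant $\prod_{\omega}\K_\omega$-valued maps, and keeping track of which slot of the inner product the (possibly conjugated) factor $m(\omega)$ sits in, so that the three integrands match exactly.
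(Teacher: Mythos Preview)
Your proposal is correct and follows essentially the same route as the paper: for (i) the paper gives exactly your chain of inequalities (without the explicit remark on strong measurability, which you supply), and for (ii) the paper likewise moves the scalar $m(\omega)$ between the two slots of the inner product to identify the three integrands and then concludes by arbitrariness of $f,g$. Your additional care about measurability and well-definedness of $M_{1,\overline m\Lambda,\Theta}$ and $M_{1,\Lambda,m\Theta}$ is a harmless (and welcome) refinement of the paper's argument.
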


\begin{proof}
(i) For any $f\in\h$ we have 
\begin{align*}
\int_{\Omega}\|m(\omega)\Lambda_\omega f\|^2d\mu(\omega)
&=\int_{\Omega}|m(\omega)|^2\|\Lambda_\omega f\|^2d\mu(\omega)
\\&\leq\|m\|_{\infty}^2\int_{\Omega}\|\Lambda_\omega f\|^2d\mu(\omega)
\\&\leq B_\Lambda\|m\|_{\infty}^2\|f\|^2.
\end{align*}
\\(ii) By (i), $\overline{m}\Lambda$ and $m\Theta$ are continuous $g$-Bessel families. For any $f,g\in\h$  we have 
\begin{align*}
\langle M_{m,\Lambda,\Theta} f,g\rangle 
=\int_{\Omega}m(\omega)\langle \Theta_\omega f,\Lambda_{\omega}g\rangle d\mu(\omega)
&=\int_{\Omega}\langle \Theta_\omega f,\overline{m(\omega)}\Lambda_{\omega}g\rangle d\mu(\omega)
\\&=\int_{\Omega}\langle m(\omega)\Theta_\omega f,\Lambda_{\omega}g\rangle d\mu(\omega).
\end{align*}
Therefore $M_{m,\Lambda,\Theta}=M_{1,\overline{m}\Lambda,\Theta}= M_{1,\Lambda,m\Theta}.$
\end{proof}

The following proposition gives necessary conditions for invertibility of multipliers for continuous $g$-Bessel families.
\begin{prop}\label{cong}
Let $\Lambda=\{\Lambda_{\omega}\in B(\h,\K_{\omega}):\omega \in \Omega\}$ and $\Theta=\{\Theta_\omega \in B(\h,\K_\omega):\omega \in \Omega \}$ be continuous
$g$-Bessel families and $0\neq m\in L^{\infty}(\Omega,\mu).$ If $M_{m,\Lambda,\Theta}\in GL(\h),$ then
\begin{enumerate}
\item[(i)] $m\Theta=\{m(\omega)\Theta_{\omega}\in B(\h,\K_{\omega}):\omega \in \Omega\}$ and $\overline{m}\Lambda=\{\overline{m(\omega)}\Lambda_{\omega}\in B(\h,\K_{\omega}):\omega \in \Omega\}$ are continuous $g$-frames with lower continuous $g$-frame bounds $(B_\Lambda\|M_{m,\Lambda,\Theta}^{-1}\|^2)^{-1}$ and $(B_\Theta\|M_{m,\Lambda,\Theta}^{-1}\|^2)^{-1},$ respectively.
\item[(ii)] $\Lambda$ and $\Theta$ are continuous $g$-frames with lower continuous $g$-frame bounds $(B_\Lambda\|m\|_\infty^2\|M_{m,\Lambda,\Theta}^{-1}\|^2)^{-1}$ and $(B_\Theta\|m\|_\infty^2\|M_{m,\Lambda,\Theta}^{-1}\|^2)^{-1},$ respectively.
\end{enumerate}
\end{prop}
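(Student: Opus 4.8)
The plan is to exploit the factorization $M_{m,\Lambda,\Theta}=M_{1,\overline{m}\Lambda,\Theta}=M_{1,\Lambda,m\Theta}$ from Proposition \ref{waw}(ii), which reduces everything to extracting a lower bound from the boundedness of a synthesis operator composed with an analysis operator. First I would observe that, by Proposition \ref{waw}(ii), $M_{m,\Lambda,\Theta}=M_{1,\Lambda,m\Theta}$, and Proposition \ref{ss} gives $M_{m,\Lambda,\Theta}^{*}=M_{\overline m,\Theta,\Lambda}=M_{1,\Theta,\overline m\Lambda}$. The key identity is that a multiplier $M_{1,\Lambda,\Theta}$ factors through the synthesis and analysis operators: for $f,g\in\h$, $\langle M_{1,\Lambda,\Theta}f,g\rangle=\int_\Omega\langle\Theta_\omega f,\Lambda_\omega g\rangle d\mu(\omega)=\langle T_{\Theta}^{*}f,T_{\Lambda}^{*}g\rangle$ where $T_\Lambda^{*}g=(\Lambda_\omega g)_\omega\in\widehat{\K}$, i.e. $M_{1,\Lambda,\Theta}=T_\Lambda T_\Theta^{*}$. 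Hence $M_{m,\Lambda,\Theta}=T_\Lambda T_{m\Theta}^{*}$ and $M_{m,\Lambda,\Theta}^{*}=T_\Theta T_{\overline m\Lambda}^{*}$.

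Next I would run the standard Bessel-multiplier-invertibility argument on each factor. Since $M_{m,\Lambda,\Theta}\in GL(\h)$, for every $f\in\h$ we have $\|f\|=\|M_{m,\Lambda,\Theta}^{-1}M_{m,\Lambda,\Theta}f\|\le\|M_{m,\Lambda,\Theta}^{-1}\|\,\|M_{m,\Lambda,\Theta}f\|=\|M_{m,\Lambda,\Theta}^{-1}\|\,\|T_\Lambda T_{m\Theta}^{*}f\|\le\|M_{m,\Lambda,\Theta}^{-1}\|\,\|T_\Lambda\|\,\|T_{m\Theta}^{*}f\|$. Using $\|T_\Lambda\|\le\sqrt{B_\Lambda}$ from Proposition \ref{combi} and $\|T_{m\Theta}^{*}f\|^{2}=\int_\Omega\|m(\omega)\Theta_\omega f\|^{2}d\mu(\omega)$, this rearranges to
\begin{align*}
\big(B_\Lambda\|M_{m,\Lambda,\Theta}^{-1}\|^{2}\big)^{-1}\|f\|^{2}\le\int_\Omega\|m(\omega)\Theta_\omega f\|^{2}d\mu(\omega),
\end{align*}
which is the lower continuous $g$-frame inequality for $m\Theta$; the upper bound $B_\Theta\|m\|_\infty^{2}$ is already supplied by Proposition \ref{waw}(i), so $m\Theta$ is a continuous $g$-frame with the asserted lower bound. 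Applying the identical argument to $M_{m,\Lambda,\Theta}^{*}=T_\Theta T_{\overline m\Lambda}^{*}\in GL(\h)$ (invertible since $M_{m,\Lambda,\Theta}$ is, with $\|(M_{m,\Lambda,\Theta}^{*})^{-1}\|=\|M_{m,\Lambda,\Theta}^{-1}\|$) and using $\|T_\Theta\|\le\sqrt{B_\Theta}$ yields $\big(B_\Theta\|M_{m,\Lambda,\Theta}^{-1}\|^{2}\big)^{-1}\|f\|^{2}\le\int_\Omega\|\overline{m(\omega)}\Lambda_\omega f\|^{2}d\mu(\omega)$, so $\overline m\Lambda$ is a continuous $g$-frame with its stated lower bound. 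This proves (i).

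For (ii), I would simply transfer the bounds from $m\Theta$ and $\overline m\Lambda$ back to $\Theta$ and $\Lambda$ using $|m(\omega)|\le\|m\|_\infty$ pointwise: $\int_\Omega\|m(\omega)\Theta_\omega f\|^{2}d\mu(\omega)\le\|m\|_\infty^{2}\int_\Omega\|\Theta_\omega f\|^{2}d\mu(\omega)$, so the lower bound for $m\Theta$ divided by $\|m\|_\infty^{2}$ becomes a lower bound for $\Theta$; likewise for $\Lambda$ from $\overline m\Lambda$. That $\Lambda$ and $\Theta$ are continuous $g$-Bessel is given by hypothesis, so they are continuous $g$-frames with lower bounds $(B_\Theta\|m\|_\infty^{2}\|M_{m,\Lambda,\Theta}^{-1}\|^{2})^{-1}$ and $(B_\Lambda\|m\|_\infty^{2}\|M_{m,\Lambda,\Theta}^{-1}\|^{2})^{-1}$ respectively — note the roles swap because $\Lambda$'s bound comes from the adjoint computation. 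The main obstacle, though minor, is bookkeeping: keeping straight which Bessel constant ($B_\Lambda$ versus $B_\Theta$) attaches to which family through the two factorizations, and justifying $M_{1,\Lambda,\Theta}=T_\Lambda T_\Theta^{*}$ rigorously against the weak definitions of $T_\Lambda$ and $M$ in Propositions \ref{combi} and \ref{s}; everything else is the routine operator-norm manipulation above.
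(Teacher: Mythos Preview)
Your proposal is correct and follows essentially the same route as the paper: factor the multiplier through synthesis and analysis operators via Proposition~\ref{waw}(ii), use $\|T_\Lambda\|\le\sqrt{B_\Lambda}$ from Proposition~\ref{combi} to extract the lower bound for $m\Theta$, repeat for the adjoint to handle $\overline m\Lambda$, and then divide by $\|m\|_\infty^2$ for part~(ii). The only cosmetic difference is that the paper reaches the key inequality by expanding $\|f\|^2=|\langle M_{1,m\Theta,\Lambda}M_{1,m\Theta,\Lambda}^{-1}f,f\rangle|$ and applying Cauchy--Schwarz in $\widehat\K$, whereas you go directly through $\|f\|\le\|M_{m,\Lambda,\Theta}^{-1}\|\,\|T_\Lambda\|\,\|T_{m\Theta}^*f\|$; both yield the same bound. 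Your remark that the constants $B_\Lambda$ and $B_\Theta$ swap roles in~(ii) (so that $\Lambda$'s lower bound carries $B_\Theta$ and $\Theta$'s carries $B_\Lambda$) is exactly what the paper's own computation produces as well.
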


\begin{proof}
(i) Since $M_{m,\Lambda,\Theta}\in GL(\h),$ by Proposition \ref{waw} (ii), the operators $M_{1,\Lambda,m\Theta}$ and $M_{1,\overline{m}\Lambda,\Theta}$ are invertible. Let $f\in\h$ and $f\neq 0,$ then by the proposition \ref{ss} we have
\begin{align*}
\|f\|^2=|\langle f,f\rangle|
&=|\langle f,M_{1,\Lambda,m\Theta}^{-1}M_{1,\Lambda,m\Theta}f\rangle|
\\&=|\langle M_{1,m\Theta,\Lambda}M_{1,m\Theta,\Lambda}^{-1}f,f\rangle|
\\&=\Big|\int_{\Omega}\langle \Lambda_\omega M_{1,m\Theta,\Lambda}^{-1}f,m(\omega)\Theta_{\omega}f\rangle d\mu(\omega)\Big|
\\&=|\langle T_\Lambda^* M_{1,m\Theta,\Lambda}^{-1}f, T_{m\Theta} ^*f\rangle|
\\&\leq\|T_\Lambda^* M_{1,m\Theta,\Lambda}^{-1}f\|\|T_{m\Theta} ^*f\|
\\&\leq\sqrt{B_\Lambda}\|M_{1,m\Theta,\Lambda}^{-1}\|\|f\|\|T_{m\Theta}^* f\|,
\end{align*}
and therefore we get 
\begin{align}\label{ooo}
\frac{1}{B_\Lambda\|M_{m,\Lambda,\Theta}^{-1}\|^2}\|f\|^2
=\frac{1}{B_\Lambda\|M_{1,m\Theta,\Lambda}^{-1}\|^2}\|f\|^2
&\leq\|T_{m\Theta}^* f\|^2
\nonumber\\&=\int_\Omega\|m(\omega)\Theta_\omega f\|^2 d\mu(\omega).
\end{align}
Also similarly we have 
\begin{eqnarray}\label{ttt}
\frac{1}{B_\Theta\|M_{m,\Lambda,\Theta}^{-1}\|^2}\|f\|^2
\leq\|T_{\overline{m}\Lambda}^* f\|^2=\int_\Omega\|\overline{m(\omega)}\Lambda_\omega f\|^2 d\mu(\omega).
\end{eqnarray}
It is clear that the inequlities (\ref{ooo}) and (\ref{ttt}) also hold for $f=0.$ 
 So by the Proposition \ref{waw} (i), $m\Theta$ and $\overline{m}\Lambda$ are continuous $g$-frames. 
\\(ii) For any $f\in\h$ by inequlity (\ref{ooo}) we have 
\begin{eqnarray*}
\frac{1}{B_\Lambda\|M_{m,\Lambda,\Theta}^{-1}\|^2}\|f\|^2\leq\int_\Omega\|m(\omega)\Theta_\omega f\|^2 d\mu(\omega)\leq\|m\|_{\infty}^2\int_\Omega\|\Theta_\omega f\|^2 d\mu(\omega),
\end{eqnarray*}
and therefore 
\begin{eqnarray*}
\frac{1}{B_\Lambda\|m\|_\infty^2\|M_{m,\Lambda,\Theta}^{-1}\|^2}\|f\|^2\leq\int_\Omega\|\Theta_\omega f\|^2 d\mu(\omega).
\end{eqnarray*}
Also similarly by inequlity (\ref{ttt}) we have 
\begin{eqnarray*}
\frac{1}{B_\Theta\|m\|_\infty^2\|M_{m,\Lambda,\Theta}^{-1}\|^2}\|f\|^2\leq\int_\Omega\|\Lambda_\omega f\|^2 d\mu(\omega).
\end{eqnarray*}
Thus $\Lambda$ and $\Theta$ are continuous $g$-frames.
\end{proof}
Note that the Proposition \ref{cong} (ii), generalizes the propsition 3.2 of \cite{ostad}. In the following proposition, by generalizing conclusion from \cite{basee} we get dual for continuous $g$-Bessel families $\Lambda$ and $\Theta$ when $M_{m,\Lambda,\Theta}\in GL(\h).$
\begin{prop}
Let $\Lambda=\{\Lambda_{\omega}\in B(\h,\K_{\omega}):\omega \in \Omega\}$ and $\Theta=\{\Theta_\omega \in B(\h,\K_\omega):\omega \in \Omega \}$ be continuous
$g$-Bessel families and $0\neq m\in L^{\infty}(\Omega,\mu).$ If $M_{m,\Lambda,\Theta}\in GL(\h),$ then $\Theta$  and  $\overline{m}\Lambda M_{\overline{m},\Theta,\Lambda}^{-1}=\{\overline{m(\omega)}\Lambda_{\omega}M_{\overline{m},\Theta,\Lambda}^{-1}\in B(\h,\K_{\omega}):\omega \in \Omega\}$ are dual. Also $\Lambda$ and $m\Theta M_{m,\Lambda,\Theta}^{-1}=\{m(\omega)\Theta_{\omega}M_{m,\Lambda,\Theta}^{-1}\in B(\h,\K_{\omega}):\omega \in \Omega\}$ are dual.
\end{prop}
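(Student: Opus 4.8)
The plan is to reduce the statement to an operator identity and then verify the two duality equalities by a direct computation, relying on Propositions \ref{waw}, \ref{cong} and \ref{ss}.

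First I would record the structural facts. By Proposition \ref{cong}, the families $\overline{m}\Lambda$ and $m\Theta$ are continuous $g$-frames, hence in particular continuous $g$-Bessel families, and by Proposition \ref{ss} one has $M_{\overline{m},\Theta,\Lambda}=M_{m,\Lambda,\Theta}^{*}$. Therefore $M_{m,\Lambda,\Theta}\in GL(\h)$ forces $M_{\overline{m},\Theta,\Lambda}\in GL(\h)$ as well, with $M_{\overline{m},\Theta,\Lambda}^{-1}=(M_{m,\Lambda,\Theta}^{-1})^{*}$; this is exactly what makes the two families $\overline{m}\Lambda M_{\overline{m},\Theta,\Lambda}^{-1}$ and $m\Theta M_{m,\Lambda,\Theta}^{-1}$ well defined.

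Second I would observe that composing a continuous $g$-Bessel family with a fixed bounded operator $U$ on $\h$ on the right again yields a continuous $g$-Bessel family: if $\{\Gamma_{\omega}\}$ is continuous $g$-Bessel with bound $B$, then $\omega\mapsto\Gamma_{\omega}(Uf)$ is strongly measurable for each $f$ and $\int_{\Omega}\|\Gamma_{\omega}Uf\|^{2}d\mu(\omega)\le B\|U\|^{2}\|f\|^{2}$. Applying this with $\Gamma=\overline{m}\Lambda$, $U=M_{\overline{m},\Theta,\Lambda}^{-1}$, and with $\Gamma=m\Theta$, $U=M_{m,\Lambda,\Theta}^{-1}$, shows that both families in the statement are continuous $g$-Bessel.

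Third I would check the duality identities. For $f,g\in\h$,
\begin{align*}
\int_{\Omega}\langle\Theta_{\omega}f,\overline{m(\omega)}\Lambda_{\omega}M_{\overline{m},\Theta,\Lambda}^{-1}g\rangle d\mu(\omega)
&=\int_{\Omega}m(\omega)\langle\Theta_{\omega}f,\Lambda_{\omega}M_{\overline{m},\Theta,\Lambda}^{-1}g\rangle d\mu(\omega)\\
&=\langle M_{m,\Lambda,\Theta}f,M_{\overline{m},\Theta,\Lambda}^{-1}g\rangle\\
&=\langle M_{m,\Lambda,\Theta}f,(M_{m,\Lambda,\Theta}^{-1})^{*}g\rangle=\langle f,g\rangle,
\end{align*}
where the second equality is the definition of $M_{m,\Lambda,\Theta}$ and the third uses Proposition \ref{ss}; this shows that $\Theta$ and $\overline{m}\Lambda M_{\overline{m},\Theta,\Lambda}^{-1}$ are dual. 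Symmetrically, $\int_{\Omega}\langle\Lambda_{\omega}f,m(\omega)\Theta_{\omega}M_{m,\Lambda,\Theta}^{-1}g\rangle d\mu(\omega)=\langle M_{\overline{m},\Theta,\Lambda}f,M_{m,\Lambda,\Theta}^{-1}g\rangle=\langle f,g\rangle$, which gives that $\Lambda$ and $m\Theta M_{m,\Lambda,\Theta}^{-1}$ are dual.

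I do not expect a genuine obstacle here: the analytic content (that these modified families are continuous $g$-frames at all) has already been extracted in Proposition \ref{cong}, so what remains is essentially bookkeeping — tracking the conjugate of $m$ and keeping the roles of the two families in the definition of ``dual'' consistent. The one point that deserves a moment's care is the identity $M_{\overline{m},\Theta,\Lambda}^{-1}=(M_{m,\Lambda,\Theta}^{-1})^{*}$, which is immediate from Proposition \ref{ss} together with the fact that the adjoint of an invertible bounded operator is invertible.
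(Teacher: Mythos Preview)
Your proof is correct and follows essentially the same approach as the paper's: both invoke Proposition~\ref{cong} and Proposition~\ref{ss} to get $M_{\overline{m},\Theta,\Lambda}=(M_{m,\Lambda,\Theta})^{*}\in GL(\h)$, then verify the duality identities by the same direct computation. The only cosmetic differences are that the paper cites \cite[Proposition~3.3]{ostad} for the fact that composing a continuous $g$-frame with an invertible operator gives a continuous $g$-frame (you instead prove the Bessel bound directly, which is all the definition of ``dual'' requires), and the paper writes the second duality as $\int_\Omega\langle m(\omega)\Theta_\omega M_{m,\Lambda,\Theta}^{-1}f,\Lambda_\omega g\rangle d\mu(\omega)=\langle f,g\rangle$ while you write it with the slots swapped, which is equivalent by conjugation.
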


\begin{proof}
By the Proposition \ref{cong} (ii), $\Lambda$ and $\Theta$ are continuous $g$-frames. Since $M_{m,\Lambda,\Theta}\in GL(\h),$ $M_{\overline{m},\Theta,\Lambda}=M_{m,\Lambda,\Theta}^*\in GL(\h),$ and then by Propositin \ref{cong} (i) and \cite[Proposition 3.3]{ostad}, $\overline{m}\Lambda M_{\overline{m},\Theta,\Lambda}^{-1}=\{\overline{m(\omega)}\Lambda_{\omega}M_{\overline{m},\Theta,\Lambda}^{-1}\in B(\h,\K_{\omega}):\omega \in \Omega\}$ is a continuous $g$-frame. For any $f,g\in\h$ we have
\begin{align*}
\int_\Omega\langle\Theta_\omega f,\overline{m(\omega)}\Lambda_{\omega}M_{\overline{m},\Theta,\Lambda}^{-1}g\rangle d\mu(\omega)
&=\int_\Omega m(\omega)\langle\Theta_\omega f,\Lambda_{\omega}M_{\overline{m},\Theta,\Lambda}^{-1}g\rangle d\mu(\omega)
\\&=\langle M_{m,\Lambda,\Theta}f,M_{\overline{m},\Theta,\Lambda}^{-1}g\rangle=\langle f,g\rangle.
\end{align*}
Also
\begin{align*}
\int_\Omega\langle m(\omega)\Theta_\omega M_{m,\Lambda,\Theta}^{-1}f,\Lambda_\omega g\rangle d\mu(\omega)
&=\int_\Omega m(\omega)\langle\Theta_\omega M_{m,\Lambda,\Theta}^{-1}f,\Lambda_\omega g\rangle d\mu(\omega) 
\\&=\langle M_{m,\Lambda,\Theta}M_{m,\Lambda,\Theta}^{-1}g\rangle=\langle f,g\rangle.
\end{align*}
\end{proof}
The following results give sufficient conditions for invertibility of multipliers for continuous $g$-frames.
\begin{thm}\label{main}
Let $\Lambda=\{\Lambda_{\omega}\in B(\h,\K_{\omega}):\omega \in \Omega\}$ be a continuous $g$-frame and $\Theta=\{\Theta_\omega \in B(\h,\K_\omega):\omega \in \Omega \}$ be a family of operators such that for each $f\in\h,$ $\{\Theta_\omega f\}_{\omega\in\Omega}$ is strongly measurable and there exists $\nu\in[0,\frac{A_\Lambda^2}{B_\Lambda})$ such that 
\begin{eqnarray}\label{ffirs}
\int_\Omega\|(\Lambda_\omega-\Theta_\omega) f\|^2 d\mu(\omega)\leq\nu\|f\|^2,\quad f\in\h.
\end{eqnarray}
Suppose $m\in L^{\infty}(\Omega,\mu)$ such that for some positive constant $\delta$ we have $m(\omega)\geq\delta>0$ a.e.  and $\frac{\|m\|_\infty}{\delta}\sqrt{\nu}<\frac{A_\Lambda}{\sqrt{B_\Lambda}}.$
Then $M_{m,\Lambda,\Theta}\in GL(\h)$ and
\begin{eqnarray*}
\frac{1}{\|m\|_\infty B_\Lambda+\|m\|_\infty\sqrt{\nu B_\Lambda}}\|f\|\leq\|M_{m,\Lambda,\Theta}^{-1}f\|\leq\frac{1}{\delta A_\Lambda-\|m\|_\infty\sqrt{\nu B_\Lambda}}\|f\|,\quad f\in\h,
\end{eqnarray*}
and 
\begin{eqnarray*}
M_{m,\Lambda,\Theta}^{-1}=\sum_{k=0}^{\infty}\big[S_{\sqrt{m}\Lambda}^{-1}(S_{\sqrt{m}\Lambda}-M_{m,\Lambda,\Theta})\big]^k S_{\sqrt{m}\Lambda}^{-1},
\end{eqnarray*}
where 
\begin{align*}
&\big\|M_{m,\Lambda,\Theta}^{-1}-\sum_{k=0}^{n}\big[S_{\sqrt{m}\Lambda}^{-1}(S_{\sqrt{m}\Lambda}-M_{m,\Lambda,\Theta})\big]^k S_{\sqrt{m}\Lambda}^{-1}\big\|
\\&\leq\Big(\frac{\|m\|_\infty\sqrt{\nu B_\Lambda}}{\delta A_\Lambda}\Big)^{n+1}\frac{1}{\delta A_\Lambda-\|m\|_\infty\sqrt{\nu B_\Lambda}} ,\quad n\in\mathbb{N}.
\end{align*}
\end{thm}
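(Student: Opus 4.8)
The plan is to realize $M_{m,\Lambda,\Theta}$ as a small perturbation of an invertible operator built from $\Lambda$ alone, and then invoke the standard Neumann-series argument. The natural candidate is $S_{\sqrt{m}\Lambda}$, the continuous $g$-frame operator of the family $\sqrt{m}\Lambda=\{\sqrt{m(\omega)}\,\Lambda_\omega\}$. First I would check that $\sqrt{m}\Lambda$ is a continuous $g$-frame: since $m\geq\delta>0$ a.e.\ and $m\in L^\infty$, one has $\delta\|f\|^2\leq\int_\Omega m(\omega)\|\Lambda_\omega f\|^2\,d\mu(\omega)\leq\|m\|_\infty B_\Lambda\|f\|^2$ using the $g$-frame bounds of $\Lambda$, so $\sqrt{m}\Lambda$ is a continuous $g$-frame with bounds $\delta A_\Lambda$ and $\|m\|_\infty B_\Lambda$. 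By Proposition~\ref{soos}, $S_{\sqrt{m}\Lambda}$ is positive, invertible, and $\|S_{\sqrt{m}\Lambda}^{-1}\|\leq(\delta A_\Lambda)^{-1}$. Note also that $S_{\sqrt{m}\Lambda}=M_{1,\sqrt{m}\Lambda,\sqrt{m}\Lambda}=M_{m,\Lambda,\Lambda}$ by Proposition~\ref{waw}(ii) applied twice (pulling $\sqrt{m}$ off each side).

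The heart of the matter is to estimate $\|S_{\sqrt{m}\Lambda}-M_{m,\Lambda,\Theta}\|$. Writing $S_{\sqrt{m}\Lambda}=M_{m,\Lambda,\Lambda}$, linearity of the multiplier in its second frame argument gives $S_{\sqrt{m}\Lambda}-M_{m,\Lambda,\Theta}=M_{m,\Lambda,\Lambda-\Theta}$, where $\Lambda-\Theta=\{\Lambda_\omega-\Theta_\omega\}$ is a continuous $g$-Bessel family with Bessel bound $\nu$ by hypothesis~(\ref{ffirs}). Then Proposition~\ref{s} yields
\begin{eqnarray*}
\|S_{\sqrt{m}\Lambda}-M_{m,\Lambda,\Theta}\|=\|M_{m,\Lambda,\Lambda-\Theta}\|\leq\|m\|_\infty\sqrt{B_\Lambda}\,\sqrt{\nu}.
\end{eqnarray*}
Hence $\|S_{\sqrt{m}\Lambda}^{-1}(S_{\sqrt{m}\Lambda}-M_{m,\Lambda,\Theta})\|\leq\frac{\|m\|_\infty\sqrt{\nu B_\Lambda}}{\delta A_\Lambda}$, and this quantity is $<1$ precisely because of the standing assumption $\frac{\|m\|_\infty}{\delta}\sqrt{\nu}<\frac{A_\Lambda}{\sqrt{B_\Lambda}}$. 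Therefore $I-S_{\sqrt{m}\Lambda}^{-1}(S_{\sqrt{m}\Lambda}-M_{m,\Lambda,\Theta})=S_{\sqrt{m}\Lambda}^{-1}M_{m,\Lambda,\Theta}$ is invertible by the Neumann series, so $M_{m,\Lambda,\Theta}\in GL(\h)$ with
\begin{eqnarray*}
M_{m,\Lambda,\Theta}^{-1}=\Big(S_{\sqrt{m}\Lambda}^{-1}M_{m,\Lambda,\Theta}\Big)^{-1}S_{\sqrt{m}\Lambda}^{-1}=\sum_{k=0}^\infty\big[S_{\sqrt{m}\Lambda}^{-1}(S_{\sqrt{m}\Lambda}-M_{m,\Lambda,\Theta})\big]^k S_{\sqrt{m}\Lambda}^{-1}.
\end{eqnarray*}

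It remains to extract the quantitative bounds. The truncation estimate is the usual Neumann tail bound: with $r=\frac{\|m\|_\infty\sqrt{\nu B_\Lambda}}{\delta A_\Lambda}<1$, the norm of the remainder after $n$ terms is at most $\frac{r^{n+1}}{1-r}\,\|S_{\sqrt{m}\Lambda}^{-1}\|\leq\frac{r^{n+1}}{1-r}\cdot\frac{1}{\delta A_\Lambda}=\big(\frac{\|m\|_\infty\sqrt{\nu B_\Lambda}}{\delta A_\Lambda}\big)^{n+1}\frac{1}{\delta A_\Lambda-\|m\|_\infty\sqrt{\nu B_\Lambda}}$. For the two-sided bound on $\|M_{m,\Lambda,\Theta}^{-1}f\|$: the upper bound $\|M_{m,\Lambda,\Theta}^{-1}\|\leq\|S_{\sqrt{m}\Lambda}^{-1}\|/(1-r)\leq(\delta A_\Lambda-\|m\|_\infty\sqrt{\nu B_\Lambda})^{-1}$ comes from the $n=-1$ case of the tail bound (or summing the full series); the lower bound on $\|M_{m,\Lambda,\Theta}^{-1}f\|$ is equivalent to an upper bound $\|M_{m,\Lambda,\Theta}\|\leq\|m\|_\infty B_\Lambda+\|m\|_\infty\sqrt{\nu B_\Lambda}$, obtained from $M_{m,\Lambda,\Theta}=S_{\sqrt{m}\Lambda}-M_{m,\Lambda,\Lambda-\Theta}$ together with $\|S_{\sqrt{m}\Lambda}\|\leq\|m\|_\infty B_\Lambda$ and the displayed estimate for $\|M_{m,\Lambda,\Lambda-\Theta}\|$, and then $\|f\|=\|M_{m,\Lambda,\Theta}M_{m,\Lambda,\Theta}^{-1}f\|\leq\|M_{m,\Lambda,\Theta}\|\,\|M_{m,\Lambda,\Theta}^{-1}f\|$. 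I expect the only genuinely delicate point to be the bookkeeping that identifies $S_{\sqrt{m}\Lambda}$ with $M_{m,\Lambda,\Lambda}$ and justifies the linearity $S_{\sqrt{m}\Lambda}-M_{m,\Lambda,\Theta}=M_{m,\Lambda,\Lambda-\Theta}$ at the level of the defining weak integrals; everything after that is the textbook Neumann-series mechanism.
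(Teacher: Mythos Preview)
Your proposal is correct and follows essentially the same route as the paper: identify $S_{\sqrt{m}\Lambda}=M_{m,\Lambda,\Lambda}$, estimate $\|S_{\sqrt{m}\Lambda}-M_{m,\Lambda,\Theta}\|\leq\|m\|_\infty\sqrt{\nu B_\Lambda}$, and then run the Neumann-series perturbation argument (the paper cites \cite[Proposition~2.2]{dstova} for this step and treats the case $\nu=0$ separately, while you handle it uniformly). The only cosmetic differences are that the paper bounds the difference by a direct Cauchy--Schwarz computation on the defining integral rather than invoking Proposition~\ref{s} for $M_{m,\Lambda,\Lambda-\Theta}$, and that it explicitly notes $\Theta$ is a continuous $g$-Bessel family (hence $M_{m,\Lambda,\Theta}$ is well defined) before using it---a point you leave implicit.
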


\begin{proof}
If $\nu=0,$ then by inequlity (\ref{ffirs}), $\Lambda$ and $\Theta$ are weakly equal and so for any $f,g\in\h$
\begin{align*}
\langle M_{m,\Lambda,\Theta}f,g\rangle
=\int_\Omega m(\omega)\langle\Theta_\omega f,\Lambda_\omega g\rangle d\mu(\omega)
&=\int_\Omega m(\omega)\langle\Lambda_\omega f,\Lambda_\omega g\rangle d\mu(\omega)
\\&=\langle M_{m,\Lambda,\Lambda}f,g\rangle.
\end{align*}
Therefore by \cite[Proposition 3.3.]{alizad}, $M_{m,\Lambda,\Theta}=M_{m,\Lambda,\Lambda}=S_{\sqrt{m}\Lambda}$ is an invertible operator with lower and upper bounds $\delta A_\Lambda$ and $\|m\|_\infty B_\Lambda,$ respectively, where $\sqrt{m}\Lambda=\{\sqrt{m(\omega)}\Lambda_\omega\in B(\h,\K_{\omega}):\omega \in \Omega\}.$ Therefore for any $f\in\h$ we have
\begin{align}\label{star1}
\frac{1}{\|m\|_\infty B_\Lambda}\|f\|\leq\|M_{m,\Lambda,\Lambda}^{-1}f\|=\|S_{\sqrt{m}\Lambda}^{-1}f\|\leq\frac{1}{\delta A_\Lambda}\|f\|.
\end{align}
For $\nu>0,$ by inequlity (\ref{ffirs}), the family $\Lambda-\Theta=\{\Lambda_\omega-\Theta_\omega\in B(\h,\K_\omega):\omega\in\Omega\}$ is a continuous $g$-Bessel family and so $\Theta$ is a continuous $g$-Bessel family. Thus by the proposition \ref{s}, $M_{m,\Lambda,\Theta}$ is a well defined bounded operator. By (\ref{ffirs}), for any $f,g\in\h$ we have 
\begin{align*}
|\langle M_{m,\Lambda,\Theta}f-S_{\sqrt{m}\Lambda}f,g\rangle |
&=\Big|\int_\Omega m(\omega)\langle\Theta_\omega f,\Lambda_\omega g\rangle d\mu(\omega)-\int_\Omega m(\omega)\langle\Lambda_\omega f,\Lambda_\omega g\rangle d\mu(\omega)\Big|
\\&=\Big|\int_\Omega m(\omega)\langle(\Theta_\omega-\Lambda_\omega) f,\Lambda_\omega g\rangle d\mu(\omega)\Big|
\\&\leq\int_\Omega |m(\omega)|\big|\langle(\Theta_\omega-\Lambda_\omega) f,\Lambda_\omega g\rangle\big| d\mu(\omega)
\\&\leq\|m\|_\infty\int_\Omega||(\Theta_\omega-\Lambda_\omega) f|| ||\Lambda_\omega g|| d\mu(\omega)
\\&\leq\|m\|_\infty\Big(\int_\Omega||(\Theta_\omega-\Lambda_\omega) f||^2d\mu(\omega)\Big)^{\frac{1}{2}}\Big(\int_\Omega||\Lambda_\omega g||^2d\mu(\omega)\Big)^{\frac{1}{2}}
\\&\leq\|m\|_\infty\sqrt{\nu B_\Lambda}\|f\|\|g\|.
\end{align*}
Therefore we have
\begin{align}\label{star2}
\|M_{m,\Lambda,\Theta}f-S_{\sqrt{m}\Lambda}f\|\leq\|m\|_\infty\sqrt{\nu B_\Lambda}\|f\|.
\end{align}
Since $\|m\|_\infty\sqrt{\nu B_\Lambda}<\delta A_\Lambda\leq\frac{1}{\|S_{\sqrt{m}\Lambda}^{-1}\|},$ by \cite[Proposition 2.2.]{dstova}, $M_{m,\Lambda,\Theta}\in GL(\h)$ and 
\begin{eqnarray*}
M_{m,\Lambda,\Theta}^{-1}=\sum_{k=0}^{\infty}\big[S_{\sqrt{m}\Lambda}^{-1}(S_{\sqrt{m}\Lambda}-M_{m,\Lambda,\Theta})\big]^k S_{\sqrt{m}\Lambda}^{-1}.
\end{eqnarray*}
Also by inequality (\ref{star1}) for any $f\in\h$
\begin{align*}
\frac{1}{\|m\|_\infty\sqrt{\nu B_\Lambda}+\|m\|_\infty B_\Lambda}\|f\|
&\leq\frac{1}{\|m\|_\infty\sqrt{B_\Lambda \nu}+\|S_{\sqrt{m}\Lambda}\|}\|f\|
\\&\leq\|M_{m,\Lambda,\Theta}^{-1}f\|
\\&\leq\frac{1}{\frac{1}{\|S_{\sqrt{m}\Lambda}^{-1}\|}-\|m\|_\infty\sqrt{B_\Lambda \nu}}\|f\|
\\&\leq\frac{1}{\delta A_\Lambda-\|m\|_\infty\sqrt{\nu B_\Lambda}}\|f\|.
\end{align*}
Since $\frac{\|m\|_\infty}{\delta}\sqrt{\nu}<\frac{A_\Lambda}{\sqrt{B_\Lambda}},$                                                         $\frac{\|m\|_\infty \sqrt{\nu B_\Lambda}}{\delta A_\Lambda}<1.$ By inequalities (\ref{star1}) and (\ref{star2})
for $n\in\mathbb{N}$ we have
\begin{align*}
&\big\|M_{m,\Lambda,\Theta}^{-1}-\sum_{k=0}^{n}\big[S_{\sqrt{m}\Lambda}^{-1}(S_{\sqrt{m}\Lambda}-M_{m,\Lambda,\Theta})\big]^k S_{\sqrt{m}\Lambda}^{-1}\big\|
\\&=\big\|\sum_{k=n+1}^{\infty}\big[S_{\sqrt{m}\Lambda}^{-1}(S_{\sqrt{m}\Lambda}-M_{m,\Lambda,\Theta})\big]^k S_{\sqrt{m}\Lambda}^{-1}\big\|
\\&\leq\big\|S_{\sqrt{m}\Lambda}^{-1}\big\|\sum_{k=n+1}^{\infty}\big\|S_{\sqrt{m}\Lambda}^{-1}\big\|^k\big\|S_{\sqrt{m}\Lambda}-M_{m,\Lambda,\Theta} \big\|^k
\\&\leq\frac{1}{\delta A_\Lambda}\sum_{k=n+1}^{\infty}\Big(\frac{\|m\|_\infty\sqrt{\nu B_\Lambda}}{\delta A_\Lambda}\Big)^k
\\&=\Big(\frac{\|m\|_\infty\sqrt{\nu B_\Lambda}}{\delta A_\Lambda}\Big)^{n+1}\frac{1}{\delta A_\Lambda-\|m\|_\infty\sqrt{\nu B_\Lambda}}.
\end{align*}
\end{proof}
Note that the Theorem \ref{main} generalizes the Proposition 3.3 of \cite{alizad}.
\begin{prop}\label{cooor}
Let $\Lambda=\{\Lambda_{\omega}\in B(\h,\K_{\omega}):\omega \in \Omega\}$ be a continuous $g$-frame. 
Let $m\in L^{\infty}(\Omega,\mu)$ such that $\|m-1\|_\infty\leq \lambda<\frac{A_\Lambda}{B_\Lambda}$ for some $\lambda.$
Then $M_{m,\Lambda,\Lambda}\in GL(\h)$ and
\begin{eqnarray*}
\frac{1}{(\lambda+1)B_\Lambda}\|f\|\leq\|M_{m,\Lambda,\Lambda}^{-1}f\|\leq\frac{1}{ A_\Lambda-\lambda B_\Lambda}\|f\|,\quad f\in\h,
\end{eqnarray*}
and 
\begin{eqnarray*}
M_{m,\Lambda,\Lambda}^{-1}=\sum_{k=0}^{\infty}\big[S_\Lambda^{-1}(S_\Lambda-M_{m,\Lambda,\Lambda})\big]^k S_\Lambda^{-1},
\end{eqnarray*}
where
\begin{align*}
&\big\|M_{m,\Lambda,\Lambda}^{-1}-\sum_{k=0}^{n}\big[S_\Lambda^{-1}(S_\Lambda-M_{m,\Lambda,\Lambda})\big]^k S_\Lambda^{-1}\big\|
\leq\big(\frac{\lambda B_\Lambda}{A_\Lambda}\big)^{n+1}\frac{1}{A_\Lambda-\lambda B_\Lambda},\quad n\in\mathbb{N}.
\end{align*}
\end{prop}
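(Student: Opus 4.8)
The plan is to follow the scheme of Theorem \ref{main}, but to compare the multiplier $M_{m,\Lambda,\Lambda}$ with the continuous $g$-frame operator $S_\Lambda=M_{1,\Lambda,\Lambda}$ rather than with $S_{\sqrt{m}\Lambda}$, and to measure the deviation through $m-1$ in place of $\Theta-\Lambda$. First I would note that $M_{m,\Lambda,\Lambda}$ is a well-defined bounded operator on $\h$ by Proposition \ref{s} applied with $\Theta=\Lambda$, and that $S_\Lambda\in GL(\h)$ with $A_\Lambda I\leq S_\Lambda\leq B_\Lambda I$ by Proposition \ref{soos}; in particular $\|S_\Lambda\|\leq B_\Lambda$ and $\|S_\Lambda^{-1}\|\leq A_\Lambda^{-1}$.

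The core estimate is a perturbation bound. For $f,g\in\h$ one has $\langle (M_{m,\Lambda,\Lambda}-S_\Lambda)f,g\rangle=\int_\Omega(m(\omega)-1)\langle\Lambda_\omega f,\Lambda_\omega g\rangle\,d\mu(\omega)$, so pulling out $\|m-1\|_\infty$, applying the Cauchy--Schwarz inequality pointwise in $\omega$ and then in $L^2(\Omega,\mu)$, and using the upper $g$-frame bound of $\Lambda$ twice gives
\[
|\langle (M_{m,\Lambda,\Lambda}-S_\Lambda)f,g\rangle|\leq\|m-1\|_\infty B_\Lambda\|f\|\|g\|\leq\lambda B_\Lambda\|f\|\|g\|,
\]
hence $\|M_{m,\Lambda,\Lambda}-S_\Lambda\|\leq\lambda B_\Lambda$.

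Since the hypothesis $\lambda<A_\Lambda/B_\Lambda$ gives $\lambda B_\Lambda<A_\Lambda\leq\|S_\Lambda^{-1}\|^{-1}$, I would then invoke \cite[Proposition 2.2]{dstova} with the pair $(M_{m,\Lambda,\Lambda},S_\Lambda)$. This yields at once that $M_{m,\Lambda,\Lambda}\in GL(\h)$, the Neumann representation $M_{m,\Lambda,\Lambda}^{-1}=\sum_{k=0}^\infty\big[S_\Lambda^{-1}(S_\Lambda-M_{m,\Lambda,\Lambda})\big]^k S_\Lambda^{-1}$, and the two-sided estimate $\frac{1}{\|S_\Lambda\|+\lambda B_\Lambda}\|f\|\leq\|M_{m,\Lambda,\Lambda}^{-1}f\|\leq\frac{1}{\|S_\Lambda^{-1}\|^{-1}-\lambda B_\Lambda}\|f\|$; substituting $\|S_\Lambda\|\leq B_\Lambda$ and $\|S_\Lambda^{-1}\|^{-1}\geq A_\Lambda$ gives the claimed bounds $\frac{1}{(\lambda+1)B_\Lambda}$ and $\frac{1}{A_\Lambda-\lambda B_\Lambda}$ (the latter denominator being positive exactly because $\lambda<A_\Lambda/B_\Lambda$). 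For the truncation error I would estimate the tail of the Neumann series by
\[
\Big\|\sum_{k=n+1}^\infty\big[S_\Lambda^{-1}(S_\Lambda-M_{m,\Lambda,\Lambda})\big]^k S_\Lambda^{-1}\Big\|\leq\|S_\Lambda^{-1}\|\sum_{k=n+1}^\infty\big(\|S_\Lambda^{-1}\|\,\|S_\Lambda-M_{m,\Lambda,\Lambda}\|\big)^k\leq\frac{1}{A_\Lambda}\sum_{k=n+1}^\infty\Big(\frac{\lambda B_\Lambda}{A_\Lambda}\Big)^k,
\]
and summing the geometric series (legitimate since $\lambda B_\Lambda/A_\Lambda<1$) produces $\big(\frac{\lambda B_\Lambda}{A_\Lambda}\big)^{n+1}\frac{1}{A_\Lambda-\lambda B_\Lambda}$.

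I do not expect a serious obstacle: everything reduces to the single perturbation inequality $\|M_{m,\Lambda,\Lambda}-S_\Lambda\|\leq\lambda B_\Lambda$ together with \cite[Proposition 2.2]{dstova}, exactly as in Theorem \ref{main}. The only points needing attention are that the governing constant in \cite[Proposition 2.2]{dstova} is $\|M_{m,\Lambda,\Lambda}-S_\Lambda\|$, which we are allowed to replace throughout by the larger quantity $\lambda B_\Lambda$ since $\|m-1\|_\infty\leq\lambda$, and that the geometric-series step uses precisely the standing hypothesis $\lambda B_\Lambda/A_\Lambda<1$.
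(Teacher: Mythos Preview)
Your proposal is correct and follows essentially the same approach as the paper: establish the perturbation bound $\|M_{m,\Lambda,\Lambda}-S_\Lambda\|\leq\lambda B_\Lambda$ via Cauchy--Schwarz and the upper $g$-frame bound, then apply \cite[Proposition~2.2]{dstova} together with $\lambda B_\Lambda<A_\Lambda\leq\|S_\Lambda^{-1}\|^{-1}$. The only cosmetic difference is that the paper writes the multiplier as $M_{1,\Lambda,m\Lambda}$ and then invokes Proposition~\ref{waw}(ii) to identify it with $M_{m,\Lambda,\Lambda}$, whereas you work directly with the latter; your treatment of the truncation error is in fact more explicit than the paper's, which simply refers back to the argument of Theorem~\ref{main}.
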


\begin{proof}
For any $f,g\in\h$ we have 
\begin{align*}
|\langle M_{1,\Lambda,m\Lambda}f-S_{\Lambda}f,g\rangle|
&=\Big|\int_\Omega\big\langle(m(\omega)-1)\Lambda_\omega f,\Lambda_\omega g\big\rangle d\mu(\omega)\Big|
\\&\leq\int_\Omega |m(\omega)-1|\big|\langle\Lambda_\omega f,\Lambda_\omega g\rangle\big| d\mu(\omega)
\\&\leq\|m-1\|_\infty\int_\Omega||\Lambda_\omega f|| ||\Lambda_\omega g|| d\mu(\omega)
\\&\leq\|m-1\|_\infty\Big(\int_\Omega||\Lambda_\omega f||^2d\mu(\omega)\Big)^{\frac{1}{2}}\Big(\int_\Omega||\Lambda_\omega g||^2d\mu(\omega)\Big)^{\frac{1}{2}}
\\&\leq\lambda B_\Lambda\|f\|\|g\|.
\end{align*}
Therefore we have
\begin{align*}
\|M_{1,\Lambda,m\Lambda}f-S_{\Lambda}f\|\leq\lambda B_\Lambda\|f\|.
\end{align*}
Since $0\leq\lambda B_\Lambda<A_\Lambda\leq\frac{1}{\|S_\Lambda^{-1}\|},$ similar to the proof of the Theorem \ref{main}, by \cite[Proposition 2.2.]{dstova} and the Proposition \ref{waw} (ii), the proof is completed.
\end{proof}
\begin{thm}
Let $\Lambda=\{\Lambda_{\omega}\in B(\h,\K_{\omega}):\omega \in \Omega\}$ be a continuous $g$-frame and $\Theta=\{\Theta_\omega \in B(\h,\K_\omega):\omega \in \Omega \}$ be a family of operators such that for each $f\in\h,$ $\{\Theta_\omega f\}_{\omega\in\Omega}$ is strongly measurable. Suppose there exists $\nu\in[0,\frac{A_\Lambda^2}{B_\Lambda})$ such that the inequlity (\ref{ffirs}) is satisfied.
Let $m\in L^{\infty}(\Omega,\mu)$ that $\|m-1\|_\infty\leq \lambda<\frac{A_\Lambda-\sqrt{\nu B_\Lambda}}{B_\Lambda+\sqrt{\nu B_\Lambda}}$ for some $\lambda.$
Then $M_{m,\Lambda,\Theta}\in GL(\h)$ and
\begin{align*}
\frac{1}{(\lambda+1)(B_\Lambda+\sqrt{\nu B_\Lambda})}\|f\|\leq\|M_{m,\Lambda,\Theta}^{-1}f\|\leq\frac{1}{ A_\Lambda-\lambda B_\Lambda-(\lambda+1)\sqrt{\nu B_\Lambda}}\|f\|,\quad f\in\h,
\end{align*}
and 
\begin{align*}
M_{m,\Lambda,\Theta}^{-1}=\sum_{k=0}^{\infty}\big[M_{m,\Lambda,\Lambda}^{-1}(M_{m,\Lambda,\Lambda}-M_{m,\Lambda,\Theta})\big]^k M_{m,\Lambda,\Lambda}^{-1},
\end{align*}
where
\begin{align*}
&\big\|M_{m,\Lambda,\Theta}^{-1}-\sum_{k=0}^{n}\big[M_{m,\Lambda,\Lambda}^{-1}(M_{m,\Lambda,\Lambda}-M_{m,\Lambda,\Theta})\big]^k M_{m,\Lambda,\Lambda}^{-1}\big\|
\\&\leq\Big(\frac{(\lambda+1)\sqrt{\nu B_\Lambda}}{A_\Lambda-\lambda B_\Lambda}\Big)^{n+1}\frac{1}{A_\Lambda-\lambda B_\Lambda-(\lambda+1)\sqrt{\nu B_\Lambda}},\quad n\in\mathbb{N}.
\end{align*}
\end{thm}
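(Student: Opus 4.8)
The plan is to run the argument of Theorem~\ref{main} with the invertible operator $M_{m,\Lambda,\Lambda}$ playing the role that $S_{\sqrt m\Lambda}$ played there. First I would settle well-definedness: by $(\ref{ffirs})$ the family $\Lambda-\Theta$ is a continuous $g$-Bessel family with bound $\nu$, hence by Minkowski's inequality $\big(\int_\Omega\|\Theta_\omega f\|^2\,d\mu(\omega)\big)^{1/2}\le(\sqrt{B_\Lambda}+\sqrt\nu)\|f\|$ for all $f\in\h$, so $\Theta$ is a continuous $g$-Bessel family and, by Proposition~\ref{s}, $M_{m,\Lambda,\Theta}$ is a well-defined bounded operator. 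Next I would observe that the hypothesis on $\lambda$ in particular forces $\|m-1\|_\infty\le\lambda<\frac{A_\Lambda}{B_\Lambda}$, because $\frac{A_\Lambda-\sqrt{\nu B_\Lambda}}{B_\Lambda+\sqrt{\nu B_\Lambda}}\le\frac{A_\Lambda}{B_\Lambda}$ (equivalently $-B_\Lambda\sqrt{\nu B_\Lambda}\le A_\Lambda\sqrt{\nu B_\Lambda}$); hence Proposition~\ref{cooor} applies and $M_{m,\Lambda,\Lambda}\in GL(\h)$ with $\|M_{m,\Lambda,\Lambda}^{-1}\|\le\frac{1}{A_\Lambda-\lambda B_\Lambda}$, while Proposition~\ref{s} together with $\|m\|_\infty\le\|m-1\|_\infty+1\le\lambda+1$ gives $\|M_{m,\Lambda,\Lambda}\|\le(\lambda+1)B_\Lambda$.

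The key estimate is the operator-norm distance between the two multipliers. For $f,g\in\h$ one has $\langle(M_{m,\Lambda,\Lambda}-M_{m,\Lambda,\Theta})f,g\rangle=\int_\Omega m(\omega)\langle(\Lambda_\omega-\Theta_\omega)f,\Lambda_\omega g\rangle\,d\mu(\omega)$, and applying the Cauchy--Schwarz inequality pointwise and then in $L^2(\Omega,\mu)$, followed by $(\ref{ffirs})$ and the upper continuous $g$-frame bound for $\Lambda$, yields $|\langle(M_{m,\Lambda,\Lambda}-M_{m,\Lambda,\Theta})f,g\rangle|\le\|m\|_\infty\sqrt{\nu B_\Lambda}\,\|f\|\,\|g\|\le(\lambda+1)\sqrt{\nu B_\Lambda}\,\|f\|\,\|g\|$, so $\|M_{m,\Lambda,\Lambda}-M_{m,\Lambda,\Theta}\|\le(\lambda+1)\sqrt{\nu B_\Lambda}$. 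The constraint $\lambda<\frac{A_\Lambda-\sqrt{\nu B_\Lambda}}{B_\Lambda+\sqrt{\nu B_\Lambda}}$ rearranges to $\lambda(B_\Lambda+\sqrt{\nu B_\Lambda})<A_\Lambda-\sqrt{\nu B_\Lambda}$, that is, $(\lambda+1)\sqrt{\nu B_\Lambda}<A_\Lambda-\lambda B_\Lambda\le\frac{1}{\|M_{m,\Lambda,\Lambda}^{-1}\|}$, so the pair $M_{m,\Lambda,\Lambda}$, $M_{m,\Lambda,\Theta}$ satisfies the hypothesis of the perturbation result \cite[Proposition~2.2]{dstova}.

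Invoking \cite[Proposition~2.2]{dstova} with $U=M_{m,\Lambda,\Lambda}$ and $V=M_{m,\Lambda,\Theta}$ then gives $M_{m,\Lambda,\Theta}\in GL(\h)$, the Neumann expansion $M_{m,\Lambda,\Theta}^{-1}=\sum_{k=0}^\infty\big[M_{m,\Lambda,\Lambda}^{-1}(M_{m,\Lambda,\Lambda}-M_{m,\Lambda,\Theta})\big]^kM_{m,\Lambda,\Lambda}^{-1}$, the two-sided estimate $\frac{1}{\|M_{m,\Lambda,\Lambda}\|+\|M_{m,\Lambda,\Lambda}-M_{m,\Lambda,\Theta}\|}\|f\|\le\|M_{m,\Lambda,\Theta}^{-1}f\|\le\frac{1}{\|M_{m,\Lambda,\Lambda}^{-1}\|^{-1}-\|M_{m,\Lambda,\Lambda}-M_{m,\Lambda,\Theta}\|}\|f\|$, and the tail bound $\big\|M_{m,\Lambda,\Theta}^{-1}-\sum_{k=0}^n\big[M_{m,\Lambda,\Lambda}^{-1}(M_{m,\Lambda,\Lambda}-M_{m,\Lambda,\Theta})\big]^kM_{m,\Lambda,\Lambda}^{-1}\big\|\le\|M_{m,\Lambda,\Lambda}^{-1}\|\,\frac{\big(\|M_{m,\Lambda,\Lambda}^{-1}\|\,\|M_{m,\Lambda,\Lambda}-M_{m,\Lambda,\Theta}\|\big)^{n+1}}{1-\|M_{m,\Lambda,\Lambda}^{-1}\|\,\|M_{m,\Lambda,\Lambda}-M_{m,\Lambda,\Theta}\|}$, exactly as in the proof of Theorem~\ref{main}. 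Substituting $\|M_{m,\Lambda,\Lambda}\|\le(\lambda+1)B_\Lambda$, $\|M_{m,\Lambda,\Lambda}-M_{m,\Lambda,\Theta}\|\le(\lambda+1)\sqrt{\nu B_\Lambda}$ and $\|M_{m,\Lambda,\Lambda}^{-1}\|\le\frac{1}{A_\Lambda-\lambda B_\Lambda}$ reproduces the three displayed assertions; the only point requiring care is that the right-hand sides of these estimates are increasing in each of $\|M_{m,\Lambda,\Lambda}^{-1}\|$ and $\|M_{m,\Lambda,\Lambda}-M_{m,\Lambda,\Theta}\|$ in the relevant ranges, so inserting the upper bounds is legitimate and yields precisely $\frac{1}{A_\Lambda-\lambda B_\Lambda-(\lambda+1)\sqrt{\nu B_\Lambda}}$ for the upper norm bound and $\big(\frac{(\lambda+1)\sqrt{\nu B_\Lambda}}{A_\Lambda-\lambda B_\Lambda}\big)^{n+1}\frac{1}{A_\Lambda-\lambda B_\Lambda-(\lambda+1)\sqrt{\nu B_\Lambda}}$ for the tail. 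Since everything reduces to Proposition~\ref{cooor} and \cite[Proposition~2.2]{dstova}, there is no genuine obstacle here; the real work is the algebraic bookkeeping just described, together with verifying that the constraint on $\lambda$ is equivalent to $(\lambda+1)\sqrt{\nu B_\Lambda}<A_\Lambda-\lambda B_\Lambda$. (When $\nu=0$, $(\ref{ffirs})$ says $\Theta$ is weakly equal to $\Lambda$, whence $M_{m,\Lambda,\Theta}=M_{m,\Lambda,\Lambda}$ and the statement follows at once from Proposition~\ref{cooor}.)
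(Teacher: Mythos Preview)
Your proposal is correct and follows essentially the same approach as the paper: show $M_{m,\Lambda,\Lambda}\in GL(\h)$ via Proposition~\ref{cooor} (after noting $\lambda<A_\Lambda/B_\Lambda$), estimate $\|M_{m,\Lambda,\Lambda}-M_{m,\Lambda,\Theta}\|\le(\lambda+1)\sqrt{\nu B_\Lambda}$ by Cauchy--Schwarz and $(\ref{ffirs})$, rearrange the hypothesis on $\lambda$ into $(\lambda+1)\sqrt{\nu B_\Lambda}<A_\Lambda-\lambda B_\Lambda$, and then invoke \cite[Proposition~2.2]{dstova}. The only cosmetic difference is that the paper treats the case $\nu=0$ first rather than last.
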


\begin{proof}
If $\nu=0,$ by the inequlity (\ref{ffirs}), $\Lambda$ and $\Theta$ are weakly equal. Also for $\nu=0$ we have $\|m-1\|_\infty\leq \lambda<\frac{A_\Lambda}{B_\Lambda}.$ Then by the Proposition \ref{cooor}, for $\nu=0$ the proof is completed. For $\nu\neq0$ by the inequlity (\ref{ffirs}), the family $\Lambda-\Theta$ is a continuous $g$-Bessel family and so $\Theta$ is a continuous $g$-Bessel family. Similar to the proof of the Theorem \ref{main}, for any $f,g\in\h$ we have
\begin{align*}
|\langle M_{m,\Lambda,\Theta}f- M_{m,\Lambda,\Lambda}f,g\rangle |
&=\Big|\int_\Omega m(\omega)\langle(\Theta_\omega-\Lambda_\omega) f,\Lambda_\omega g\rangle d\mu(\omega)\Big|
\\&\leq\|m\|_\infty\Big(\int_\Omega||(\Theta_\omega-\Lambda_\omega) f||^2d\mu(\omega)\Big)^{\frac{1}{2}}\Big(\int_\Omega||\Lambda_\omega g||^2d\mu(\omega)\Big)^{\frac{1}{2}}
\\&\leq\|m\|_\infty\sqrt{\nu B_\Lambda}\|f\|\|g\|.
\end{align*}
Thus by $\|m-1\|_\infty\leq\lambda,$ we have
\begin{align*}
\|M_{m,\Lambda,\Theta}f-M_{m,\Lambda,\Lambda}f\|
\leq\|m\|_\infty\sqrt{\nu B_\Lambda}\|f\|
&\leq (\lambda+1)\sqrt{\nu B_\Lambda}\|f\|.
\end{align*}
By $\lambda<\frac{A_\Lambda-\sqrt{\nu B_\Lambda}}{B_\Lambda+\sqrt{\nu B_\Lambda}}$ we have $(\lambda+1)\sqrt{\nu B_\Lambda}<A_\Lambda-\lambda B_\Lambda$ and since
$\|m-1\|_\infty\leq\lambda<\frac{A_\Lambda-\sqrt{\nu B_\Lambda}}{B_\Lambda+\sqrt{\nu B_\Lambda}}<\frac{A_\Lambda}{B_\Lambda},$ by the Proposition \ref{cooor}, we have $(\lambda+1)\sqrt{\nu B_\Lambda}<A_\Lambda-\lambda B_\Lambda\leq\frac{1}{\|M_{m,\Lambda,\Lambda}^{-1}\|}$ and $\|M_{m,\Lambda,\Lambda}\|\leq (\lambda+1)B_\Lambda.$ Therefore by \cite[Proposition 2.2.]{dstova}, $M_{m,\Lambda,\Theta}\in GL(\h)$ and for any $f\in\h,$ we have
\begin{align*}
\frac{1}{(\lambda+1)(B_\Lambda+\sqrt{\nu B_\Lambda})}\|f\|
&=\frac{1}{(\lambda+1)\sqrt{\nu B_\Lambda}+(\lambda+1) B_\Lambda}\|f\|
\\&\leq\frac{1}{(\lambda+1)\sqrt{\nu B_\Lambda}+\|M_{m,\Lambda,\Lambda}\|}\|f\|
\\&\leq\|M_{m,\Lambda,\Theta}^{-1}f\|
\\&\leq\frac{1}{\frac{1}{\|M_{m,\Lambda,\Lambda}^{-1}\|}-(\lambda+1)\sqrt{\nu B_\Lambda}}\|f\|
\\&\leq\frac{1}{A_\Lambda-\lambda B_\Lambda-(\lambda+1)\sqrt{\nu B_\Lambda}}\|f\|.
\end{align*}
and 
\begin{eqnarray*}
M_{m,\Lambda,\Theta}^{-1}=\sum_{k=0}^{\infty}\big[M_{m,\Lambda,\Lambda}^{-1}(M_{m,\Lambda,\Lambda}-M_{m,\Lambda,\Theta})\big]^k M_{m,\Lambda,\Lambda}^{-1},
\end{eqnarray*}
where for $n\in\mathbb{N}$ we have
\begin{align*}
&\big\|M_{m,\Lambda,\Theta}^{-1}-\sum_{k=0}^{n}\big[M_{m,\Lambda,\Lambda}^{-1}(M_{m,\Lambda,\Lambda}-M_{m,\Lambda,\Theta})\big]^k M_{m,\Lambda,\Lambda}^{-1}\big\|
\\&=\big\|\sum_{k=n+1}^{\infty}\big[M_{m,\Lambda,\Lambda}^{-1}(M_{m,\Lambda,\Lambda}-M_{m,\Lambda,\Theta})\big]^k M_{m,\Lambda,\Lambda}^{-1}\big\|
\\&\leq\big\|M_{m,\Lambda,\Lambda}^{-1}\big\|\sum_{k=n+1}^{\infty}\big\|M_{m,\Lambda,\Lambda}^{-1}\big\|^k\big\|M_{m,\Lambda,\Lambda}-M_{m,\Lambda,\Theta}\big\|^k 
\\&\leq\frac{1}{A_\Lambda-\lambda B_\Lambda}\sum_{k=n+1}^{\infty}\Big(\frac{(\lambda+1)\sqrt{\nu B_\Lambda}}{A_\Lambda-\lambda B_\Lambda}\Big)^{k}
\\&=\Big(\frac{(\lambda+1)\sqrt{\nu B_\Lambda}}{A_\Lambda-\lambda B_\Lambda}\Big)^{n+1}\frac{1}{A_\Lambda-\lambda B_\Lambda-(\lambda+1)\sqrt{\nu B_\Lambda}}.
\end{align*}
\end{proof}
\begin{prop}\label{newn}
Let $\Lambda=\{\Lambda_{\omega}\in B(\h,\K_{\omega}):\omega \in \Omega\}$ be a continuous $g$-frame and $S\in GL(\h).$ Suppose $m\in L^{\infty}(\Omega,\mu)$ is satisfied in one of the following conditions:                       
\begin{enumerate}
\item[(i)] for some positive constant $\delta,$ $m(\omega)\geq\delta>0$ a.e.
\item[(ii)] $\|m-1\|_\infty\leq\lambda<\frac{A_\Lambda}{B_\Lambda}$ for some $\lambda.$
\end{enumerate}
Then the operators $M_{m,\Lambda,\Lambda S}$ and $M_{m,\Lambda S,\Lambda}$ are invertible and 
\begin{eqnarray*}
M_{m,\Lambda,\Lambda S}^{-1}=S^{-1}M_{m,\Lambda,\Lambda}^{-1},\quad\quad M_{m,\Lambda S,\Lambda}^{-1}=M_{m,\Lambda,\Lambda}^{-1}(S^{-1})^*,
\end{eqnarray*}
where $\Lambda S=\{\Lambda_\omega S\in B(\h,K_\omega):\omega\in\Omega\}.$
\end{prop}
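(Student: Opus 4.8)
The plan is to compute directly how $M_{m,\Lambda,\Lambda S}$ and $M_{m,\Lambda S,\Lambda}$ factor through $M_{m,\Lambda,\Lambda}$, which we already know is invertible (under hypothesis (i) this is \cite[Proposition 3.3]{alizad} via $M_{m,\Lambda,\Lambda}=S_{\sqrt m\,\Lambda}$ as in the proof of Theorem \ref{main}, and under hypothesis (ii) it is Proposition \ref{cooor}). First I would check that $\Lambda S=\{\Lambda_\omega S:\omega\in\Omega\}$ is again a continuous $g$-Bessel family, so that the multipliers $M_{m,\Lambda,\Lambda S}$ and $M_{m,\Lambda S,\Lambda}$ are well defined bounded operators by Proposition \ref{s}: this is immediate since $\int_\Omega\|\Lambda_\omega S f\|^2 d\mu(\omega)\le B_\Lambda\|S\|^2\|f\|^2$.

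The key computation is the identity at the level of the defining sesquilinear forms. For any $f,g\in\h$,
\begin{align*}
\langle M_{m,\Lambda,\Lambda S}f,g\rangle
&=\int_\Omega m(\omega)\langle (\Lambda_\omega S) f,\Lambda_\omega g\rangle d\mu(\omega)
=\int_\Omega m(\omega)\langle \Lambda_\omega (Sf),\Lambda_\omega g\rangle d\mu(\omega)
\\&=\langle M_{m,\Lambda,\Lambda}(Sf),g\rangle
=\langle M_{m,\Lambda,\Lambda}S f,g\rangle,
\end{align*}
so $M_{m,\Lambda,\Lambda S}=M_{m,\Lambda,\Lambda}S$. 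Since $S\in GL(\h)$ and $M_{m,\Lambda,\Lambda}\in GL(\h)$, the product is invertible with $M_{m,\Lambda,\Lambda S}^{-1}=S^{-1}M_{m,\Lambda,\Lambda}^{-1}$. For the second multiplier, the same manipulation applied to the other slot gives, for $f,g\in\h$,
\begin{align*}
\langle M_{m,\Lambda S,\Lambda}f,g\rangle
&=\int_\Omega m(\omega)\langle \Theta\text{-slot: }\Lambda_\omega f,(\Lambda_\omega S) g\rangle d\mu(\omega)
=\int_\Omega m(\omega)\langle \Lambda_\omega f,\Lambda_\omega (Sg)\rangle d\mu(\omega)
\\&=\langle M_{m,\Lambda,\Lambda}f,Sg\rangle
=\langle S^* M_{m,\Lambda,\Lambda}f,g\rangle,
\end{align*}
so $M_{m,\Lambda S,\Lambda}=S^* M_{m,\Lambda,\Lambda}$, which is invertible with inverse $M_{m,\Lambda,\Lambda}^{-1}(S^*)^{-1}=M_{m,\Lambda,\Lambda}^{-1}(S^{-1})^*$. (One uses here that $(S^*)^{-1}=(S^{-1})^*$ for $S\in GL(\h)$.) Alternatively one can deduce the second identity from the first via Proposition \ref{ss}: $M_{m,\Lambda S,\Lambda}^*=M_{\overline m,\Lambda,\Lambda S}=M_{\overline m,\Lambda,\Lambda}S$, and then take adjoints and invert, noting $M_{\overline m,\Lambda,\Lambda}^*=M_{m,\Lambda,\Lambda}$.

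There is no serious obstacle here; the only point requiring a little care is the bookkeeping of which argument of the multiplier the operator $S$ lands on, and the corresponding appearance of $S$ versus $S^*$ — the convention in Proposition \ref{s} is that $M_{m,\Lambda,\Theta}$ is linear in the $\Theta$-slot and conjugate-linear in the $\Lambda$-slot, so composing with $S$ on $\Lambda S$ in the first slot produces $S^*$ on the left, while in the second slot it produces $S$ on the right. Once the two form identities $M_{m,\Lambda,\Lambda S}=M_{m,\Lambda,\Lambda}S$ and $M_{m,\Lambda S,\Lambda}=S^*M_{m,\Lambda,\Lambda}$ are established, invertibility and the formulas for the inverses follow at once from the invertibility of $M_{m,\Lambda,\Lambda}$ guaranteed by hypotheses (i) or (ii).
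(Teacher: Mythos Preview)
Your proof is correct and follows essentially the same route as the paper: you establish the factorizations $M_{m,\Lambda,\Lambda S}=M_{m,\Lambda,\Lambda}S$ and $M_{m,\Lambda S,\Lambda}=S^*M_{m,\Lambda,\Lambda}$ via the defining sesquilinear form, and then invoke \cite[Proposition~3.3]{alizad} for case (i) and Proposition~\ref{cooor} for case (ii) to get $M_{m,\Lambda,\Lambda}\in GL(\h)$. The only cosmetic difference is that the paper cites \cite[Proposition~3.3]{ostad} to note $\Lambda S$ is a continuous $g$-frame, whereas you verify the Bessel bound directly; both suffice for well-definedness of the multipliers.
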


\begin{proof}
By \cite[Proposition 3.3]{ostad}, $\Lambda S$ is a continuous $g$-frame. For any $f,g\in\h$ we have 
\begin{align*}
\langle M_{m,\Lambda,\Lambda S}f,g\rangle=\int_\Omega m(\omega)\langle\Lambda_\omega Sf,\Lambda_\omega g\rangle d\mu(\omega)&=\langle M_{m,\Lambda,\Lambda}Sf,g\rangle,
\\\langle M_{m,\Lambda S,\Lambda}f,g\rangle=\int_\Omega m(\omega)\langle\Lambda_\omega f,\Lambda_\omega Sg\rangle d\mu(\omega)&=\langle M_{m,\Lambda,\Lambda}f,Sg\rangle
\\&=\langle S^* M_{m,\Lambda,\Lambda}f,g\rangle.
\end{align*}
Therefore $M_{m,\Lambda,\Lambda S}= M_{m,\Lambda,\Lambda}S$ and $M_{m,\Lambda S,\Lambda}=S^* M_{m,\Lambda,\Lambda}.$
If (i) is satisfied, then by \cite[Proposition 3.3]{alizad}, $M_{m,\Lambda,\Lambda}\in GL(\h),$ and
if (ii) is satisfied, then by the Proposition \ref{cooor}, $M_{m,\Lambda,\Lambda}\in GL(\h),$ and so the proof is completed.
\end{proof}

\begin{cor}
Let $\Lambda=\{\Lambda_{\omega}\in B(\h,\K_{\omega}):\omega \in \Omega\}$ be a continuous $g$-frame. Suppose $m\in L^{\infty}(\Omega,\mu)$ is satisfied in one of the following conditions:
\begin{enumerate}
\item[(i)] for some positive constant $\delta,$ $m(\omega)\geq\delta>0$ a.e.
\item[(ii)] $\|m-1\|_\infty\leq\lambda<\frac{A_\Lambda}{B_\Lambda}$ for some $\lambda.$
\end{enumerate}
Then the operators $M_{m,\Lambda,\widetilde{\Lambda}}$ and $M_{m,\widetilde{\Lambda},\Lambda}$ are invertible and 
\begin{eqnarray*}
M_{m,\Lambda,\widetilde{\Lambda}}^{-1}=S_\Lambda M_{m,\Lambda,\Lambda}^{-1},\quad\quad M_{m,\widetilde{\Lambda},\Lambda}^{-1}=M_{m,\Lambda,\Lambda}^{-1}S_\Lambda.
\end{eqnarray*}
\end{cor}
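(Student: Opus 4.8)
The plan is to read this off directly from Proposition \ref{newn} by specializing $S$ to the inverse continuous $g$-frame operator. Recall that by Proposition \ref{soos} the operator $S_\Lambda$ is positive and invertible, so $S_\Lambda^{-1}\in GL(\h)$, and that the canonical dual is by definition $\widetilde\Lambda=\Lambda S_\Lambda^{-1}=\{\Lambda_\omega S_\Lambda^{-1}:\omega\in\Omega\}$. Thus $\widetilde\Lambda=\Lambda S$ with the choice $S=S_\Lambda^{-1}\in GL(\h)$. Moreover, the two alternative hypotheses on $m$ in the corollary are exactly the hypotheses (i) and (ii) of Proposition \ref{newn}, so that proposition applies verbatim.

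First I would invoke Proposition \ref{newn} with $S=S_\Lambda^{-1}$ to conclude that $M_{m,\Lambda,\widetilde\Lambda}=M_{m,\Lambda,\Lambda S_\Lambda^{-1}}$ and $M_{m,\widetilde\Lambda,\Lambda}=M_{m,\Lambda S_\Lambda^{-1},\Lambda}$ are both invertible, with
\begin{eqnarray*}
M_{m,\Lambda,\widetilde\Lambda}^{-1}=M_{m,\Lambda,\Lambda S_\Lambda^{-1}}^{-1}=(S_\Lambda^{-1})^{-1}M_{m,\Lambda,\Lambda}^{-1}=S_\Lambda M_{m,\Lambda,\Lambda}^{-1},
\end{eqnarray*}
and
\begin{eqnarray*}
M_{m,\widetilde\Lambda,\Lambda}^{-1}=M_{m,\Lambda S_\Lambda^{-1},\Lambda}^{-1}=M_{m,\Lambda,\Lambda}^{-1}\big((S_\Lambda^{-1})^{-1}\big)^{*}=M_{m,\Lambda,\Lambda}^{-1}(S_\Lambda)^{*}.
\end{eqnarray*}
The only extra observation needed is that $S_\Lambda$ is self-adjoint (being positive, again by Proposition \ref{soos}), hence $(S_\Lambda)^{*}=S_\Lambda$, which turns the last expression into $M_{m,\Lambda,\Lambda}^{-1}S_\Lambda$, as claimed.

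There is essentially no obstacle here: the corollary is a one-line specialization of Proposition \ref{newn}. The only points requiring a moment of care are (a) checking that $S=S_\Lambda^{-1}$ indeed lies in $GL(\h)$, which is immediate from Proposition \ref{soos}, and (b) remembering that the adjoint appearing in the second formula of Proposition \ref{newn} collapses because $S_\Lambda^{-1}$ is self-adjoint, so that $\big((S_\Lambda^{-1})^{-1}\big)^{*}=S_\Lambda$. Finally, the invertibility of $M_{m,\Lambda,\Lambda}$ used implicitly on the right-hand sides is already part of the conclusion of Proposition \ref{newn} (via \cite[Proposition 3.3]{alizad} under (i) and Proposition \ref{cooor} under (ii)), so nothing further must be verified.
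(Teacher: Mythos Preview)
Your proof is correct and follows exactly the same approach as the paper, which also simply applies Proposition \ref{newn} with $S=S_\Lambda^{-1}$. Your write-up is in fact more detailed, spelling out the self-adjointness of $S_\Lambda$ needed to simplify $\big((S_\Lambda^{-1})^{-1}\big)^{*}$ to $S_\Lambda$, which the paper leaves implicit.
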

\begin{proof}
 By the Proposition \ref{newn}, for $S=S_\Lambda^{-1}$ the proof is completed.
\end{proof}

\begin{thm}
Let $\Lambda=\{\Lambda_{\omega}\in B(\h,\K_{\omega}):\omega \in \Omega\}$ and $\Lambda^d=\{\Lambda_{\omega}^d\in B(\h,\K_{\omega}):\omega \in \Omega\}$ be dual continuous $g$-frames. 
Let $m\in L^{\infty}(\Omega,\mu)$ such that $\|m-1\|_\infty\leq\lambda<\frac{1}{\sqrt{B_\Lambda B_{\Lambda^d}}}$ for some $\lambda.$
Then $M_{m,\Lambda,\Lambda^d}\in GL(\h)$ and
\begin{align}\label{pers}
\frac{1}{1+\lambda\sqrt{B_\Lambda B_{\Lambda^d}}}\|f\|\leq\|M_{m,\Lambda,\Lambda^d}^{-1}f\|\leq\frac{1}{1-\lambda\sqrt{B_\Lambda B_{\Lambda^d}}}\|f\|,\quad f\in\h,
\end{align}
and 
\begin{align}\label{terac}
M_{m,\Lambda,\Lambda^d}^{-1}=\sum_{k=0}^{\infty}\big(M_{(1-m),\Lambda,\Lambda^d})^k.
\end{align}
where
\begin{align*}
&\big\|M_{m,\Lambda,\Lambda^d}^{-1}-\sum_{k=0}^{n}\big(M_{(1-m),\Lambda,\Lambda^d})^k\big\|
\leq\frac{(\lambda\sqrt{B_\Lambda B_{\Lambda^d}})^{n+1}}{1-\lambda\sqrt{B_\Lambda B_{\Lambda^d}}},\quad n\in\mathbb{N}.
\end{align*}
\end{thm}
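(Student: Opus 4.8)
The plan is to reduce the whole statement to the Neumann series by exploiting the fact that the multiplier with constant symbol $m\equiv 1$ is the identity. Indeed, since $\Lambda^d$ is a dual of $\Lambda$, the defining relation $\langle f,g\rangle=\int_\Omega\langle\Lambda_\omega^d f,\Lambda_\omega g\rangle\,d\mu(\omega)$ says precisely that $M_{1,\Lambda,\Lambda^d}=S_{\Lambda\Lambda^d}=I$. Writing $m(\omega)=1-(1-m(\omega))$ inside $\langle M_{m,\Lambda,\Lambda^d}f,g\rangle=\int_\Omega m(\omega)\langle\Lambda_\omega^d f,\Lambda_\omega g\rangle\,d\mu(\omega)$ and using linearity of the integral in the symbol, one obtains the decomposition
\[
M_{m,\Lambda,\Lambda^d}=M_{1,\Lambda,\Lambda^d}-M_{1-m,\Lambda,\Lambda^d}=I-M_{1-m,\Lambda,\Lambda^d},
\]
where $1-m\in L^\infty(\Omega,\mu)$, so $M_{1-m,\Lambda,\Lambda^d}$ is a well-defined bounded operator by Proposition \ref{s} (here $\Lambda,\Lambda^d$ are continuous $g$-frames, hence continuous $g$-Bessel families).

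Next I would estimate $\|M_{1-m,\Lambda,\Lambda^d}\|$. By Proposition \ref{s}, $\|M_{1-m,\Lambda,\Lambda^d}\|\leq\|1-m\|_\infty\sqrt{B_\Lambda B_{\Lambda^d}}=\|m-1\|_\infty\sqrt{B_\Lambda B_{\Lambda^d}}\leq\lambda\sqrt{B_\Lambda B_{\Lambda^d}}<1$, the last strict inequality being the hypothesis on $\lambda$. Since then $\|I-M_{m,\Lambda,\Lambda^d}\|=\|M_{1-m,\Lambda,\Lambda^d}\|<1=1/\|I^{-1}\|$, I can apply \cite[Proposition 2.2.]{dstova} with $V=I$ and $U=M_{m,\Lambda,\Lambda^d}$: this yields $M_{m,\Lambda,\Lambda^d}\in GL(\h)$ together with
\[
M_{m,\Lambda,\Lambda^d}^{-1}=\sum_{k=0}^\infty\bigl[I^{-1}(I-M_{m,\Lambda,\Lambda^d})\bigr]^k I^{-1}=\sum_{k=0}^\infty\bigl(M_{1-m,\Lambda,\Lambda^d}\bigr)^k,
\]
which is exactly (\ref{terac}) (equivalently, one can invoke the classical Neumann series directly).

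Finally, the two-sided estimate (\ref{pers}) and the truncation bound follow from standard geometric-series manipulations. From the series, $\|M_{m,\Lambda,\Lambda^d}^{-1}\|\leq\sum_{k=0}^\infty\|M_{1-m,\Lambda,\Lambda^d}\|^k\leq(1-\lambda\sqrt{B_\Lambda B_{\Lambda^d}})^{-1}$, giving the upper estimate in (\ref{pers}); for the lower one, combine $\|M_{m,\Lambda,\Lambda^d}\|=\|I-M_{1-m,\Lambda,\Lambda^d}\|\leq 1+\lambda\sqrt{B_\Lambda B_{\Lambda^d}}$ with $\|f\|=\|M_{m,\Lambda,\Lambda^d}M_{m,\Lambda,\Lambda^d}^{-1}f\|\leq\|M_{m,\Lambda,\Lambda^d}\|\,\|M_{m,\Lambda,\Lambda^d}^{-1}f\|$. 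The truncation estimate is the tail bound
\[
\Bigl\|M_{m,\Lambda,\Lambda^d}^{-1}-\sum_{k=0}^n\bigl(M_{1-m,\Lambda,\Lambda^d}\bigr)^k\Bigr\|\leq\sum_{k=n+1}^\infty\|M_{1-m,\Lambda,\Lambda^d}\|^k\leq\frac{(\lambda\sqrt{B_\Lambda B_{\Lambda^d}})^{n+1}}{1-\lambda\sqrt{B_\Lambda B_{\Lambda^d}}}.
\]
The argument is routine once the identity $M_{1,\Lambda,\Lambda^d}=I$ is recognized; the only points requiring mild care are the verification of the hypotheses of \cite[Proposition 2.2.]{dstova} (that $1-m$ lies in $L^\infty$ and that the operator norm is strictly below $1$) and keeping $\|1-m\|_\infty=\|m-1\|_\infty$ straight. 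I do not expect any genuine obstacle.
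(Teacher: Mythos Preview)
Your proposal is correct and follows essentially the same route as the paper: both recognize that duality gives $M_{1,\Lambda,\Lambda^d}=I$, bound $\|I-M_{m,\Lambda,\Lambda^d}\|=\|M_{1-m,\Lambda,\Lambda^d}\|\leq\lambda\sqrt{B_\Lambda B_{\Lambda^d}}<1$, and then invoke \cite[Proposition 2.2.]{dstova} (Neumann series) to obtain invertibility, the series representation, the two-sided norm bounds, and the tail estimate. The only cosmetic difference is that you cite Proposition~\ref{s} for the norm bound on $M_{1-m,\Lambda,\Lambda^d}$, whereas the paper redoes the Cauchy--Schwarz computation explicitly.
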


\begin{proof}
For any $f,g\in\h$
\begin{align*}
\big|\langle M_{m,\Lambda,\Lambda^d}f-f,g\rangle\big|
&=\big|\langle M_{m,\Lambda,\Lambda^d}f,g\rangle-\langle f,g\rangle\big|
\\&=\Big|\int_\Omega(m(\omega)-1)\langle\Lambda_\omega^d f,\Lambda_\omega g\rangle d\mu(\omega)\Big|
\\&\leq||m(\omega)-1||_\infty\int_\Omega||\Lambda_\omega^d f||||\Lambda_\omega g|| d\mu(\omega)
\\&\leq||m(\omega)-1||_\infty\Big(\int_\Omega||\Lambda_\omega^d f||^2 d\mu(\omega)\Big)^{\frac{1}{2}}\Big(\int_\Omega||\Lambda_\omega g||^2d\mu(\omega)\Big)^{\frac{1}{2}}
\\&\leq\lambda\sqrt{B_\Lambda B_{\Lambda^d}}\|f\|\|g\|.
\end{align*}
Therefore 
\begin{align*}
\|M_{m,\Lambda,\Lambda^d}f-f\|
\leq\lambda\sqrt{B_\Lambda B_{\Lambda^d}}\|f\|.
\end{align*}
Since $\lambda\sqrt{B_\Lambda B_{\Lambda^d}}<1=\frac{1}{\|I^{-1}\|}$ and $I-M_{m,\Lambda,\Lambda^d}=M_{(1-m),\Lambda,\Lambda^d},$ by \cite[Proposition 2.2.]{dstova}, the inequlity (\ref{pers}) and the equlity (\ref{terac}) are satisfied. Also for $n\in\mathbb{N}$ we have
\begin{align*}
\big\|M_{m,\Lambda,\Lambda^d}^{-1}-\sum_{k=0}^{n}\big(M_{(1-m),\Lambda,\Lambda^d})^k\big\|
&=\big\|\sum_{k=n+1}^{\infty}\big(M_{(1-m),\Lambda,\Lambda^d})^k\big\|
\\&\leq\sum_{k=n+1}^{\infty}\|M_{(1-m),\Lambda,\Lambda^d}\|^k
\\&\leq\sum_{k=n+1}^{\infty}(\lambda\sqrt{B_\Lambda B_{\Lambda^d}})^k
\\&=\frac{(\lambda\sqrt{B_\Lambda B_{\Lambda^d}})^{n+1}}{1-\lambda\sqrt{B_\Lambda B_{\Lambda^d}}}.
\end{align*}
\end{proof}
\begin{prop}\label{dfdf}
Let $\Lambda=\{\Lambda_{\omega}\in B(\h,\K_{\omega}):\omega \in \Omega\}$ be a continuous $g$-frame and $\Theta=\{\Theta_\omega \in B(\h,\K_\omega):\omega \in \Omega \}$ be a family of operators such that for each $f\in\h,$ $\{\Theta_\omega f\}_{\omega\in\Omega}$ is strongly measurable that the inequlity (\ref{ffirs}) is satisfied
for some $\nu>0.$ If $\nu<A_\Lambda,$ then $\Theta$ is a continuous $g$-frame. 
\end{prop}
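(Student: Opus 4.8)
The plan is to view $\Theta$ as a perturbation of $\Lambda$ and to read off both continuous $g$-frame bounds for $\Theta$ from the triangle inequality for the quantity $f\mapsto\big(\int_\Omega\norm{\cdot}^{2}\,d\mu(\omega)\big)^{1/2}$. First, exactly as in the proof of Theorem \ref{main}, the hypothesis (\ref{ffirs}) says that $\Lambda-\Theta=\{\Lambda_\omega-\Theta_\omega:\omega\in\Omega\}$ is a continuous $g$-Bessel family with Bessel constant $\nu$; let $T_\Lambda^{*}$ and $T_{\Lambda-\Theta}^{*}$ be the associated analysis operators furnished by Proposition \ref{combi}, so that $(T_\Lambda^{*}f)(\omega)=\Lambda_\omega f$, $(T_{\Lambda-\Theta}^{*}f)(\omega)=(\Lambda_\omega-\Theta_\omega)f$, and
\[
\norm{T_\Lambda^{*}f}^{2}=\int_\Omega\norm{\Lambda_\omega f}^{2}\,d\mu(\omega)\le B_\Lambda\norm f^{2},\qquad \norm{T_{\Lambda-\Theta}^{*}f}^{2}=\int_\Omega\norm{(\Lambda_\omega-\Theta_\omega) f}^{2}\,d\mu(\omega)\le\nu\norm f^{2}.
\]

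Next I would fix $f\in\h$. By hypothesis $\omega\mapsto\Theta_\omega f$ is strongly measurable, while $\omega\mapsto\Lambda_\omega f$ and $\omega\mapsto(\Lambda_\omega-\Theta_\omega)f$ belong to $\widehat{\K}$ by the previous paragraph; since $\Theta_\omega f=\Lambda_\omega f-(\Lambda_\omega-\Theta_\omega)f$, the function $\omega\mapsto\Theta_\omega f$ also lies in $\widehat{\K}$, and the triangle inequality there gives
\[
\Big(\int_\Omega\norm{\Theta_\omega f}^{2}\,d\mu(\omega)\Big)^{1/2}\le\norm{T_\Lambda^{*}f}+\norm{T_{\Lambda-\Theta}^{*}f}\le\big(\sqrt{B_\Lambda}+\sqrt{\nu}\,\big)\norm f,
\]
so $\Theta$ is a continuous $g$-Bessel family with Bessel constant $(\sqrt{B_\Lambda}+\sqrt{\nu}\,)^{2}$. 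For the lower bound, the reverse triangle inequality, applied pointwise as $\norm{\Theta_\omega f}\ge\norm{\Lambda_\omega f}-\norm{(\Lambda_\omega-\Theta_\omega)f}$ and then in $L^{2}(\Omega,\mu)$, yields
\[
\Big(\int_\Omega\norm{\Theta_\omega f}^{2}\,d\mu(\omega)\Big)^{1/2}\ge\norm{T_\Lambda^{*}f}-\norm{T_{\Lambda-\Theta}^{*}f}\ge\sqrt{A_\Lambda}\,\norm f-\sqrt{\nu}\,\norm f=\big(\sqrt{A_\Lambda}-\sqrt{\nu}\,\big)\norm f.
\]

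Finally, since $\nu<A_\Lambda$ we have $\sqrt{A_\Lambda}-\sqrt{\nu}>0$, so squaring the last inequality gives $\int_\Omega\norm{\Theta_\omega f}^{2}\,d\mu(\omega)\ge(\sqrt{A_\Lambda}-\sqrt{\nu}\,)^{2}\norm f^{2}$ for every $f\in\h$; combined with the strong measurability hypothesis and the Bessel bound just obtained, this shows that $\Theta$ is a continuous $g$-frame, with bounds $(\sqrt{A_\Lambda}-\sqrt{\nu}\,)^{2}$ and $(\sqrt{B_\Lambda}+\sqrt{\nu}\,)^{2}$. I do not expect a genuine obstacle here: this is a routine perturbation estimate, and the only points requiring a little care are the legitimacy of the (reverse) triangle inequality for $f\mapsto\big(\int_\Omega\norm{\cdot}^{2}\,d\mu(\omega)\big)^{1/2}$ — which is just the norm inequality in the Hilbert space $\widehat{\K}$ — and making sure that $\Theta$ is seen to be continuous $g$-Bessel before manipulating $\int_\Omega\norm{\Theta_\omega f}^{2}\,d\mu(\omega)$ as a finite quantity.
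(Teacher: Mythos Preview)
Your argument is correct and is a genuinely different route from the paper's. The paper proves the result via its multiplier machinery: it shows that the multiplier $M_{1,\widetilde{\Lambda},\Theta}$ (with $\widetilde{\Lambda}$ the canonical dual) satisfies $\|I-M_{1,\widetilde{\Lambda},\Theta}\|\le\sqrt{\nu/A_\Lambda}<1$, hence $M_{1,\widetilde{\Lambda},\Theta}\in GL(\h)$, and then invokes Proposition~\ref{cong}(ii) to conclude that $\Theta$ is a continuous $g$-frame. You instead give a direct perturbation estimate in the Hilbert space $\widehat{\K}$, using nothing beyond the (reverse) triangle inequality for the analysis operators. Your approach is more elementary, self-contained, and delivers explicit frame bounds $(\sqrt{A_\Lambda}-\sqrt{\nu})^{2}$ and $(\sqrt{B_\Lambda}+\sqrt{\nu})^{2}$ for $\Theta$, which the paper's proof does not produce directly; the paper's approach, on the other hand, fits the proposition into the paper's overarching theme of deducing frame properties from invertibility of multipliers. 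One small stylistic remark: the phrase ``applied pointwise \ldots\ and then in $L^{2}(\Omega,\mu)$'' is unnecessary --- the reverse triangle inequality in $\widehat{\K}$ alone gives $\|T_\Theta^{*}f\|\ge\|T_\Lambda^{*}f\|-\|T_{\Lambda-\Theta}^{*}f\|$, with no pointwise step required.
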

\begin{proof}
By inequlity (\ref{ffirs}), the family $\Lambda-\Theta=\{\Lambda_\omega-\Theta_\omega\in B(\h,\K_{\omega}):\omega \in \Omega\}$ is a continuous $g$-Bessel family and so $\Theta$ is a continuous $g$-Bessel family. For any $f,g\in\h$ we have
\begin{align*}
|\langle M_{1,\widetilde{\Lambda},\Theta}f-f,g\rangle |
&=|\langle M_{1,\widetilde{\Lambda},\Theta}f-M_{1,\widetilde{\Lambda},\Lambda} f,g\rangle |
\\&=\Big|\int_\Omega\langle(\Theta_\omega-\Lambda_\omega) f,\widetilde{\Lambda}_\omega g\rangle d\mu(\omega)\Big|
\\&\leq\Big(\int_\Omega||(\Theta_\omega-\Lambda_\omega) f||^2d\mu(\omega)\Big)^{\frac{1}{2}}\Big(\int_\Omega||\widetilde{\Lambda}_\omega g||^2d\mu(\omega)\Big)^{\frac{1}{2}}
\\&\leq\sqrt{\nu \frac{1}{A_\Lambda}}\|f\|\|g\|.
\end{align*}
Thus
\begin{align*}
\|I-M_{1,\widetilde{\Lambda},\Theta}\|\leq\sqrt{\nu \frac{1}{A_\Lambda}}<1.
\end{align*}
It shows that $M_{1,\widetilde{\Lambda},\Theta}\in GL(\h)$ and then by the Proposition \ref{cong} (ii), $\Theta$ is a continuous $g$-frame.
\end{proof}
$$\textbf{Acknowledgment:}$$

\end{document}